\numberwithin{equation}{section}
\newcommand{\seq}[1]{{\left\langle{#1}\right\rangle}}
\newcommand{\smallseq}[1]{\langle{#1}\rangle}
\newcommand{\tth}{{}^{\textup{th}}}
\newcommand{\conc}{\hat{\,\,}}
\newcommand{\rest}[1]{\! \upharpoonright\!{#1}}
\newcommand{\andd}{\,\,\,\&\,\,\,}
\newcommand{\Iff}{\,\,\Longleftrightarrow\,\,}
\newcommand{\converge}{\!\!\downarrow}
\newcommand{\Tur}{\textup{\scriptsize T}}
\newcommand{\Int}{\mathbb Z}
\newcommand{\Nat}{\mathbb N}
\newcommand{\w}{\omega}
\newcommand{\s}{\sigma}
\renewcommand{\phi}{\varphi}
\renewcommand{\epsilon}{\varepsilon}
\renewcommand{\le}{\leqslant}
\renewcommand{\ge}{\geqslant}
\renewcommand{\preceq}{\preccurlyeq}
\renewcommand{\succeq}{\succcurlyeq}
\newcommand{\tleq}{\trianglelefteq}
\newcommand{\tle}{\triangleleft}
\newcommand{\treq}[2]{\trianglerighteq^{#1}_{#2}}
\newcommand{\Club}{\clubsuit}
\newcommand{\wock}{\w_1^{\textup{ck}}}
\newcommand{\bSigma}{\boldsymbol{\Sigma}}
\newcommand{\bDelta}{\boldsymbol{\Delta}}
\newcommand{\bGamma}{\boldsymbol{\Gamma}}
\newcommand{\bPi}{\boldsymbol{\Pi}}
\newcommand{\ATR}{\mathsf{ATR}}       
\newcommand{\PiCA}{\Pi^1_1\text{-}\mathsf{CA}}    
\newcommand{\IND}{\mathsf{IND}}
\renewcommand{\*}[1]{\mathbf{#1}}
\renewcommand{\b}[1]{\overline{#1}}
\newcommand{\Baire}{\mathcal{N}}
\newcommand{\emptystring}{\seq{}}
\newcommand{\RefClub}{{\hyperref[TSP:Club]{\textup{}($\Club$)}}}
\newcommand{\ClassName}[1]{\textup{#1}}
\newcommand{\SU}[2]{\ClassName{SU}_{#1}(#2)}
\newcommand{\ISU}[2]{\ClassName{ISU}^{(#1)}_{#2}}
\declaretheorem[numberwithin=section,refname={Theorem,Theorems}]{theorem}
\declaretheorem[sibling=theorem,refname={Proposition,Propositions}]{proposition}
\declaretheorem[sibling=theorem,style=definition,refname={Definition,Definitions}]{definition}
\declaretheorem[sibling=theorem,style=remark,refname={Remark,Remarks}]{remark}
\newcounter{claimCounter}[theorem]
\theoremstyle{remark}
\newcounter{LRClaim}
\declaretheorem[numberlike=LRClaim,style=remark,name=Claim,refname={Claim,Claims}]{lrclaim}
\declaretheorem[numberlike=LRClaim,style=remark,name=Definition,refname={Definition,Definitions}]{lrdef}
\newlist{equivalent}{enumerate}{1}
\setlist[equivalent,1]{label=\textup{(\arabic*)}}
\newlist{sublemma}{enumerate}{1}
\setlist[sublemma,1]{label=\textup{(\alph*)}}
\newlist{orderedlist}{enumerate}{1}
\setlist[orderedlist,1]{label=(\roman*)}
\newlist{sublemma*}{enumerate*}{1}
\setlist[sublemma*,1]{label=\textup{(\alph*)},afterlabel=\hspace{5pt}}
\newlist{orderedlist*}{enumerate*}{1}
\setlist[orderedlist*,1]{label=\textup{(\roman*)},afterlabel=\hspace{3pt}}
\newlist{TSPlist}{enumerate}{1}
\setlist[TSPlist,1]{labelindent=0em,labelwidth=4em,labelsep=0.5em,leftmargin=5em,itemindent=!,
label=\textup{TSP(\arabic*):},ref=\textup{TSP(\arabic*)}}
\begin{document}

\title[Iterated priority arguments in descriptive set theory]{Iterated priority arguments in Descriptive Set Theory}

\author[A.\:Day]{Adam Day}

\author[N.\:Greenberg]{Noam Greenberg}
\address{School of Mathematics and Statistics\\ Victoria University of Wellington\\ Wellington, New Zealand}
\email{greenberg@msor.vuw.ac.nz}

\author[M.\:Harrison-Trainor]{Matthew Harrison-Trainor}
\address{Department of Mathematics\\ University of Michigan}
\email{matthhar@umich.edu}

\author[D.\:Turetsky]{Dan Turetsky}
\address{School of Mathematics and Statistics\\ Victoria University of Wellington\\ Wellington, New Zealand}
\email{dan.turetsky@vuw.ac.nz}

\begin{abstract}
We present the true stages machinery and illustrate its applications to descriptive set theory.  We use this machinery to provide new proofs of the Hausdorff-Kuratowski and Wadge theorems on the structure of $\*\Delta^0_\xi$, Louveau and Saint-Raymond's separation theorem, and Louveau's separation theorem.
\end{abstract}

\maketitle

\section{Introduction}

Ever since Mostowski \cite{Mostowski:1946} and Addison \cite{Addison:59:Separation} observed the connections between Kleene's work on computability and the hyperarithmetic hierarchy \cite{Kleene:1943,Kleene:55:Souslin} and Lusin and Souslin's study of Borel and analytic sets \cite{lusin:17,souslin:17}, effective methods have been widely utilised in descriptive set theory. The computable theory gives alternative proofs of classical arguments, for example Addison's proof of Kondo's co-analytic uniformisation \cite{kondo:38}, Sack's proof of the measurability of analytic sets (see \cite[Lem.II.6.2]{SacksBook}), or Solovay's effectivised Galvin-Prikry theorem~\cite{Solovay:78:HyperarithmeticallyEncoded}. More recently, effective tools were used for the $E_0$-dichotomy for Borel equivalence relations (Harrington, Kechris and Louveau~\cite{HarringtonKechrisLouveau}) and the $G_0$-dichotomy for Borel graph colourings (Kechris, Solecki and Todorcevic~\cite{KechrisSoleckiTodorcevic}). Perhaps the most prominent interaction of the effective and classical hierarchies appear in Louveau's separation theorem \cite{Louveau:80:Separation}, and other work of Loueveau, Saint-Raymond and Debs \cite{LouveauSR:WH,LouveauSR:Strength,Debs:87}. 

Beyond the basic understanding of the hyperarithmetic hierarchy as a refinement of the Borel one, a number of tools involving computability theory have been used in these studies, for instance computability on admissible sets, and the Gandy-Harrington topology. However, the most unique and central technique of computability, the priority method initiated by Friedberg \cite{Friedberg:Priority} and Muchnik \cite{Muchnik:56}, appeared to have limited application to questions of set theory. Martin's original proof of Borel determinacy \cite{Martin:BorelDeterminacy} had a finite-injury aspect to it; recent work by Lecomte and Zeleny \cite{LecomteZeleny} involves an infinite-injury argument. These appeared to be isolated incidents. 

This changed very recently, first with work of Day, Downey and Westrick studying topological degrees of discontinuous functions \cite{DayDowneyWestrick}, and then with Day and Marks's resolution of the decomposability conjecture \cite{DayMarks}. Both of these used iterated priority arguments, in particular, as formulated by Montalb\'an's \emph{true stages} machinery. 

Iterated priority arguments originated in works of Harrington (unpublished, see \cite{Knight:Workers:Finite,Knight:Worker:Transfinite}) and Ash (\cite{Ash:86:meta}, see \cite{AshKnight:book}) in computable structure theory. These are complex priority arguments whose iterated nature reflects the hyperarithmetic hierarchy. They are used to construct computable objects while satisfying some ordinal-height system of requirements and guessing at non-computable information. The idea is to break such a construction down into smaller, more tractable pieces. A typical application is the Ash-Watnick theorem \cite{Ash:90:Watnick}, which states that if~$\delta$ is a computable ordinal and~$\+L$ is a~$\Delta^0_{2\delta+1}$ linear ordering, then $\Int^\delta \cdot \+L$ has a computable copy. Here we guess at what~$\+L$ is, while building approximations to an iteration (of length~$\delta$) of the Hausdorff derivative of our copy of $\Int^\delta\cdot \+L$. Another style of iterated priority arguments has been formulated by Lempp and Lerman \cite{LemppLerman:abstract,Lerman:Framework:Book}, who use sequences of trees of strategies to unfold complicated requirements. 

Montalb\'an \cite{Montalban:TrueStages:paper} extended Ash's metatheorem and presented it in a dynamic way, using the concept of true stages. This concept generalises Dekker's non-deficiency stages, those which provide a correct approximation of the halting problem. Perhaps surprisingly, using a relativised version, true stages turn out to also be of use in constructing functions on Baire space and other Polish spaces, as was demonstrated by Day, Downey and Westrick, and Day and Marks. In some sense, however, the history of such applications goes back several decades: Louveau and Saint-Raymond developed a techinque called the ramification method to prove Borel Wadge determinacy~\cite{LouveauSR:WH,LouveauSR:Strength}. An examination of this technique reveals that it is fundamentally a ``worker argument'', Harrington's formulation of his iterated priority arguments. 

The purpose of this paper is to describe the general {true stages} machinery and illustrate its applications to descriptive set theory by providing new proofs of several results. We start with results which do not actually require a priority argument, but for which the machinery gives smooth proofs regardless: we discuss changes of topology, the Hausdorff-Kuratowski theorem on the structure of $\*\Delta^0_{\xi+1}$, and Wadge's theorem on the structure of $\*\Delta^0_{\lambda}$, for $\lambda$ limit. We then show how to use true stages for a priority argument, and give a proof of Louveau's separation theorem. 

These studies have applications to reverse mathematics. The question of the axiomatic strength of results in descriptive set theory was raised by Louveau and Saint-Raymond. In \cite{LouveauSR:Strength} they showed that Borel Wadge determinacy is provable in second-order arithmetic.
This was quite surprising, since the most straightforward proof relies on Borel determinacy, which is known to require strong axioms \cite{FriedmanH:BorelDeterminacy}; and since $\bPi^1_1$ Wadge determinacy was known to be equivalent to full $\bPi^1_1$ determinacy (Harrington \cite{Harrington:AnalyticDeterminacy}); the same holds for $\bPi^1_2$ (Hjorth \cite{Hjorth:Wadge:Pi12}). The ramification method allowed Louveau and Saint-Raymond to avoid the reliance on Borel determinacy and rather, reduce the Borel Wadge games to closed games. Similarly, the standard proof of Louveau's separation theorem uses the Gandy-Harrington topology, and can be carried out using the $\Pi^1_1$-comprehension axiom system. Our proof shows that in fact, the weaker system $\ATR_0$ (arithmetical transfinite recursion) suffices. 

This paper is intended as one of a pair. In the companion paper \cite{companionpaper}, we give a new and effective classification of all Borel Wadge classes, using the true stages machinery. A corollary is a proof of Borel Wadge determinacy in $\ATR_0 + \Sigma^1_1$-$\IND$. One of the steps is a proof of the Louveau and Saint-Raymond separation theorem \cite{LouveauSR:WH} for all Borel Wadge classes. In the current paper we give the simpler argument, for the classes $\bSigma^0_\xi$, from which we derive the Louveau separation theorem.

\subsection{A note on subscripts}

In the hyperarithmetic hierarchy, statements about the finite levels of the hierarchy often have an ``off by one'' error when generalized to the infinite levels.  This is illustrated by (and can be viewed as originating from) the following pair of results:

\begin{proposition}[Post~\cite{Post:48:report}]
For $n < \omega$ and a set $X \subseteq \omega$, $X \le_T \emptyset^{(n)}$ if and only if $X \in \Delta^0_{n+1}$.
\end{proposition}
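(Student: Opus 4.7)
The plan is to prove the proposition by induction on $n$, mediated by the standard Post lemma relating the arithmetical hierarchy to the Turing jumps: for every $n \geq 0$, a set $A \subseteq \omega$ is $\Sigma^0_{n+1}$ if and only if $A$ is computably enumerable in $\emptyset^{(n)}$. Granting this lemma, the proposition is immediate, since $X \leq_T \emptyset^{(n)}$ iff both $X$ and its complement are c.e.\ in $\emptyset^{(n)}$, iff both are $\Sigma^0_{n+1}$, iff $X \in \Delta^0_{n+1}$.

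The lemma itself is proved by a separate induction on $n$, and this is the content of the argument. The base case $n = 0$ is the definition of $\Sigma^0_1$ as the class of c.e.\ sets. For the inductive step, assume the claim for $n$. For the forward direction, given $A \in \Sigma^0_{n+2}$, write $A = \{x : \exists y\, R(x,y)\}$ with $R \in \Pi^0_{n+1}$; by the induction hypothesis applied to $\overline{R}$, the set $\overline{R}$ is c.e.\ in $\emptyset^{(n)}$, so $R \leq_m \emptyset^{(n+1)}$, and thus $A$ is c.e.\ in $\emptyset^{(n+1)}$ by searching for $y$ and then checking $R(x,y)$ via the oracle. For the reverse direction, if $A$ is c.e.\ in $\emptyset^{(n+1)}$, expand the statement ``$\varphi_e^{\emptyset^{(n+1)}}(x)\converge$'' into: there exists a finite halting computation whose positive oracle queries all lie in $\emptyset^{(n+1)}$ (a $\Sigma^0_{n+1}$ condition on each query, by induction) and whose negative queries all lie outside it (a $\Pi^0_{n+1}$ condition); the resulting formula places $A$ into $\Sigma^0_{n+2}$.

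The main point requiring care is the quantifier accounting in the reverse direction: one must verify that existentially quantifying a finite conjunction of $\Sigma^0_{n+1}$ and $\Pi^0_{n+1}$ conditions over a computation code yields a single $\Sigma^0_{n+2}$ formula. Everything else is routine manipulation of definitions.
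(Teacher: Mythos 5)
Your argument is a correct and entirely standard proof of Post's theorem; the paper itself states this proposition as a classical result of Post and offers no proof, so there is nothing to compare against. One cosmetic slip: from $\overline{R}$ being c.e.\ in $\emptyset^{(n)}$ you get $\overline{R} \le_m \emptyset^{(n+1)}$, hence $R \le_m \overline{\emptyset^{(n+1)}}$ rather than $R \le_m \emptyset^{(n+1)}$ as written; since all you use is $R \le_T \emptyset^{(n+1)}$, the argument is unaffected. Your quantifier accounting in the reverse direction (bounded conjunctions of $\Sigma^0_{n+1}$ and $\Pi^0_{n+1}$ conditions under one existential quantifier land in $\Sigma^0_{n+2}$) is the genuinely delicate point and you handle it correctly.
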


\begin{proposition}[Ash, see~\cite{AshKnight:book}]
For $\omega \le \alpha < \wock$ and a set $X \subseteq \omega$, $X \le_T \emptyset^{(\alpha)}$ if and only if $X \in \Delta^0_\alpha$.  
\end{proposition}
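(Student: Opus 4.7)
The plan is a transfinite induction on $\alpha$, working relative to a fixed system of notations from $\KleeneO$ so that $\emptyset^{(\alpha)}$ and $\Sigma^0_\alpha$ are simultaneously defined at each stage. At the base, $\alpha = \w$, the identity $\emptyset^{(\w)} \equiv_T \bigoplus_n \emptyset^{(n)}$ drives both directions: a Turing reduction $X = \Phi^{\emptyset^{(\w)}}$ uses only finitely many oracle bits per input $k$, all lying in some initial segment $\emptyset^{(n_k)}$, so ``$k \in X$'' is an effective union over $n$ of $\Sigma$-in-$\emptyset^{(n)}$ predicates, i.e., $\Sigma^0_\w$; applied to both $X$ and $X^c$ this yields $X \in \Delta^0_\w$. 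Conversely, $X \in \Delta^0_\w$ means $X$ and $X^c$ are effective unions of finite-level sets, each computable from some $\emptyset^{(n)} \leq_T \emptyset^{(\w)}$ by the finite Post theorem, so a uniform search produces $X \leq_T \emptyset^{(\w)}$.

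For the successor step $\alpha = \beta+1$ with $\beta \geq \w$, I invoke the relativised Post theorem at level one: $Y \leq_T A'$ iff both $Y$ and $Y^c$ are c.e.\ in $A$. Taking $A = \emptyset^{(\beta)}$, so that $A' \equiv_T \emptyset^{(\alpha)}$, and using that the definition of $\Sigma^0_\alpha$ at these (infinite) successor stages aligns with ``c.e.\ in $\emptyset^{(\beta)}$'', gives $X \leq_T \emptyset^{(\alpha)}$ iff $X \in \Delta^0_\alpha$. For the limit step $\alpha = \lambda$ with notation $a = 3 \cdot 5^e$, we have $\emptyset^{(\lambda)} \equiv_T \bigoplus_n \emptyset^{(\beta_n)}$ for the sequence $\beta_n = |\phi_e(n)| \to \lambda$, and $\Sigma^0_\lambda$ is the corresponding effective-union class; the argument of the base case relativises directly, so finite-use reductions to the join correspond to effective unions of lower-level c.e.\ predicates, and by the inductive hypothesis applied to each $\beta_n$ the two sides coincide.

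The main obstacle is conceptual: reconciling the shift in Post's finite theorem (where $\leq_T \emptyset^{(n)}$ corresponds to $\Delta^0_{n+1}$) with its absence at infinite levels (where $\leq_T \emptyset^{(\alpha)}$ corresponds to $\Delta^0_\alpha$). The resolution lies in the definition of the hierarchy at limits: $\emptyset^{(\lambda)}$ is the effective join of \emph{all} lower-level jumps, so it already encodes the extra jump that the finite case demands, and the shift is absorbed into the limit definition rather than propagated upward. A secondary technical point is the dependence of the construction on the choice of notation $a \in \KleeneO$; the cleanest route is to fix a canonical notation per ordinal and verify notation-invariance as a separate corollary, rather than wiring it into the induction.
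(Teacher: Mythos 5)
The paper does not actually prove this proposition; it is quoted as background with a citation to Ash--Knight, so your argument can only be judged on its own internal correctness. Your overall architecture --- transfinite induction with base case $\omega$ (where the ``$+1$'' is absorbed by $\emptyset^{(\omega)} \equiv_T \bigoplus_n \emptyset^{(n)}$), plus successor and limit steps above it --- is the standard route, and your base and limit cases are fine in outline, modulo the usual uniformity-in-indices bookkeeping.

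However, the successor step contains a genuine error, and it is an off-by-one of exactly the kind this proposition exists to highlight. First, the ``relativised Post theorem at level one'' is misquoted: $Y$ and $Y^c$ both c.e.\ in $A$ is equivalent to $Y \le_T A$, not to $Y \le_T A'$; the correct statement is that $Y \le_T A'$ iff $Y$ and $Y^c$ are both $\Sigma^0_2(A)$, i.e.\ c.e.\ in $A'$. Second, for $\beta \ge \omega$ the class $\Sigma^0_{\beta+1}$ does \emph{not} align with ``c.e.\ in $\emptyset^{(\beta)}$'': by the inductive hypothesis in its $\Sigma$-form, the sets c.e.\ in $\emptyset^{(\beta)}$ are exactly $\Sigma^0_{1+\beta} = \Sigma^0_\beta$, one level too low. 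Both of your intermediate equivalences are therefore false, and although composing them happens to read off the desired conclusion, the shared middle term ``$X$ and $X^c$ are c.e.\ in $\emptyset^{(\beta)}$'' is genuinely equivalent to $X \le_T \emptyset^{(\beta)}$, hence to $X \in \Delta^0_\beta$; so the step as written only reproves the inductive hypothesis and never reaches level $\beta+1$. The repair is routine: carry the invariant $\Sigma^0_{1+\gamma} = \Sigma^0_1(\emptyset^{(\gamma)})$ (uniformly) through the induction, note that $\Sigma^0_{\beta+1}$, being the class of effective unions of $\Pi^0_\beta = \Pi^0_1(\emptyset^{(\beta)})$ sets, equals $\Sigma^0_2(\emptyset^{(\beta)}) = \Sigma^0_1(\emptyset^{(\beta+1)})$, and then apply the correctly stated complementation/Post theorem. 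Relatedly, your closing remark that the shift is ``absorbed into the limit definition rather than propagated upward'' is only half the story: absorption happens at each limit, but it must also be \emph{maintained} at successors above $\omega$, and that maintenance is precisely the step your write-up gets wrong.
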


Observe that the subscript in the first result contains ``$+1$'', while the subscript in the second does not.  Ultimately, this comes down to the fact that computable corresponds to $\Delta^0_1$ rather than $\Delta^0_0$. Similarly, the class of open sets is denoted~$\bSigma^0_1$ rather than~$\bSigma^0_0$, which contrasts, for example with the Baire hierarchy of Borel functions.  This can be unified, however, by making use of $1+\alpha$.  Note that for $\alpha < \omega$, $1+\alpha = \alpha+1$, while for $\alpha \ge \omega$, $1+\alpha = \alpha$.  Thus the following holds:

\begin{proposition}
For $\alpha < \wock$ and a set $X \subseteq \omega$, $X \le_T \emptyset^{(\alpha)}$ if and only if $X \in \Delta^0_{1+\alpha}$.  
\end{proposition}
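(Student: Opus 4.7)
The plan is to observe that this proposition is merely a syntactic unification of the two preceding propositions via the ordinal arithmetic identity for $1+\alpha$, so no new mathematical content is required. The proof is by case analysis on whether $\alpha$ is finite or infinite, with the choice of cases dictated precisely by how $1+\alpha$ simplifies.

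First I would recall the ordinal arithmetic facts used implicitly in the introduction: for $\alpha < \omega$ we have $1 + \alpha = \alpha + 1$, while for $\alpha \geq \omega$ (including countable ordinals below $\wock$) absorption gives $1 + \alpha = \alpha$. These are immediate from the definition of ordinal addition and the fact that $\omega$ absorbs $1$ on the left.

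Then I would split into two cases. In the case $\alpha < \omega$, the claim $X \leq_T \emptyset^{(\alpha)} \Iff X \in \Delta^0_{1+\alpha}$ becomes $X \leq_T \emptyset^{(\alpha)} \Iff X \in \Delta^0_{\alpha+1}$, which is exactly Post's proposition. In the case $\omega \leq \alpha < \wock$, the claim becomes $X \leq_T \emptyset^{(\alpha)} \Iff X \in \Delta^0_{\alpha}$, which is exactly Ash's proposition. These two cases exhaust the range $\alpha < \wock$, so the unified statement follows.

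There is no genuine obstacle here; the only subtle point is confirming that the two source propositions cover all $\alpha < \wock$ with no gap or overlap that would alter the statement, and that the indexing convention $\Delta^0_{1+\alpha}$ is the correct device for bridging the ``off by one'' phenomenon described in the surrounding discussion. The proposition should be stated with the understanding that it is a notational repackaging whose utility is in allowing uniform statements throughout the rest of the paper.
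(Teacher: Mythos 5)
Your proposal is correct and matches the paper exactly: the paper derives this proposition from the preceding two (Post for $\alpha<\omega$, Ash for $\omega\le\alpha<\wock$) precisely via the observation that $1+\alpha=\alpha+1$ in the finite case and $1+\alpha=\alpha$ in the infinite case, with no further argument. Nothing is missing.
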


We will make extensive use of this device, generally in the subscripts of~$\Delta$'s and~$\Sigma$'s.

\section{Change of topology and true Stages}\label{sec:topology_and_true_stages}

A commonly used technique in descriptive set theory is the enrichment of the topology of a Polish space. For simplicity, we shall restrict ourselves in this paper to Baire space $\Baire = \w^\w$. The following is a version of \cite[Thm.13.1]{Kechris:book}:

\begin{proposition} \label{prop:change_of_topology}
    If $A\subseteq \Baire$ is Borel, then there is a Polish topology on $\Baire$, extending the standard one, and which has the same Borel sets, in which~$A$ is open.  
\end{proposition}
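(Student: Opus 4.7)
The plan is to prove the proposition by transfinite induction on the Borel complexity of $A$, using two refinement lemmas as building blocks.

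The first building block handles adjoining a single closed set: if $(X,\tau)$ is Polish and $F \subseteq X$ is $\tau$-closed, then the topology $\tau'$ generated by $\tau \cup \{F\}$ is Polish with the same Borel sets as $\tau$. I would prove this by observing that $(X,\tau')$ is the topological disjoint union of the subspaces $F$ and $X \setminus F$ (each with the subspace topology inherited from $\tau$); the first is Polish because it is $\tau$-closed, the second because it is $\tau$-open (hence $G_\delta$ in itself), and disjoint sums of two Polish spaces are Polish. The Borel $\sigma$-algebras agree because $F$ is already $\tau$-Borel.

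The second building block handles countable suprema. Given an increasing sequence $\tau_0 \subseteq \tau_1 \subseteq \cdots$ of Polish topologies on $\Baire$ sharing a common Borel $\sigma$-algebra, I would show that the topology $\tau_\infty$ generated by $\bigcup_n \tau_n$ is also Polish with the same Borel sets. The argument embeds $(\Baire,\tau_\infty)$ as the diagonal $\{(x,x,\ldots) : x \in \Baire\}$ inside the product $\prod_n (\Baire,\tau_n)$. The subspace topology induced on the diagonal agrees with $\tau_\infty$, and the diagonal is closed in the product because each $\tau_n$ refines the Hausdorff topology $\tau_0$; thus $(\Baire,\tau_\infty)$ is a closed subspace of a Polish product and therefore Polish.

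With these two lemmas in hand, the main argument shows that the family $\mathcal{F}$ of sets $A$ admitting a Polish refinement of the standard topology (with the same Borel sets) in which $A$ is clopen forms a $\sigma$-algebra containing every standard open set. Every standard open set is in $\mathcal{F}$ by applying the first lemma to its closed complement; closure under complements is automatic; for countable unions $A = \bigcup_n A_n$ with witnessing refinements $\tau_n$ for each $A_n$, the second lemma supplies a common refinement $\tau_\infty$ in which every $A_n$, and hence $A$ itself, is open, after which one more application of the first lemma inside $\tau_\infty$ promotes $A$ from open to clopen. Since $\mathcal{F}$ is a $\sigma$-algebra containing the topology, it contains every Borel set, and the proposition follows on taking any witness for $A$ itself.

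The main obstacle is the verification of the limit lemma, especially the fact that the diagonal is closed in the product; this is where the Hausdorffness of $\tau_0$ is crucial, and it is also where one must track that each refinement step preserves the Borel $\sigma$-algebra so that the hypothesis of the lemma remains available inductively. The remaining verifications are routine bookkeeping.
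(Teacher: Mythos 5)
Your argument is correct, but it is the classical route --- essentially Kechris's Theorem 13.1 --- which the paper deliberately bypasses. You build the refined topology set-by-set: a lemma for adjoining one closed set (splitting the space into the topological sum of $F$ and its complement), a lemma for countable joins (embedding as the diagonal in a product, closed by Hausdorffness of the base topology), and then the observation that the sets which can be made clopen form a $\sigma$-algebra containing the open sets. All the steps check out; the only things to tidy are that the join lemma does not actually need the sequence of topologies to be increasing (only that each refines the standard Hausdorff topology, which is exactly what you have in the application), and that preservation of the Borel $\sigma$-algebra at the join step uses second countability of each $\tau_n$ so that every $\tau_\infty$-open set is a countable union of old Borel sets. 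The paper instead derives \cref{prop:change_of_topology} from a single canonical refinement: for a Borel $A$ one fixes $z$ and $\alpha$ with $A\in\Sigma^0_{1+\alpha}(z)$ and takes the $(z,\alpha)$-topology generated by all $\Sigma^0_{1+\alpha}(z)$ sets, which is Polish either via the Kuratowski--Sierpi\'nski homeomorphism of \cref{prop:Kuratowksi_Sierpinski} or, in the true-stages presentation, because it is induced by the tree metric $d_\alpha$ on $(\w^{<\w},\preceq_\alpha)$ together with \ref{TSP:Sigma_alpha_sets}. Your approach is more elementary and self-contained, needing no effective machinery; the paper's approach buys a uniform, effectively presented topology at each level of the hierarchy, which is what the later lightface and reverse-mathematical applications require.
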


The proof given in \cite{Kechris:book} is direct, by induction on the Borel rank of~$A$. Alternatively, it can be deduced from a characterisation of classes of Borel sets in terms of generalised homeomorphisms, due to Kuratowski \cite{Kuratowski:33} and Sierpi{\'n}ski \cite{Sierpinski:33}. Recall that a function $f\colon Y\to X$ between Polish spaces is $\bSigma^0_\xi$-measurable if $f^{-1}[U]$ is $\bSigma^0_\xi$ for every open set $U\subseteq X$. A function $f\colon \Baire\to X$ is $\bSigma^0_{\xi+1}$-measurable if and only if it is Baire class~$\xi$ (see \cite[Thm.24.3]{Kechris:book}). Kuratowski and Sierpi{\'n}ski essentially showed: 

\begin{proposition} \label{prop:Kuratowksi_Sierpinski}
    Let $\xi \ge 1$. A set $A\subseteq \Baire$ is $\bSigma^0_{\xi}$ if and only if there is a closed set $E\subseteq \Baire$ and a bijection $f\colon \Baire\to E$ such that:
    \begin{orderedlist}
        \item $f$ is $\bSigma^0_\xi$-measurable;
        \item $f^{-1}$ is continuous; and
        \item $A = f^{-1}[B]$ for some open $B\subseteq \Baire$. 
    \end{orderedlist}
\end{proposition}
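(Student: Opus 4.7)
The reverse implication is immediate: if $E$, $f$, and $B$ are as in the statement, then $A = f^{-1}[B]$ where $B \subseteq \Baire$ is open, and $\bSigma^0_\xi$-measurability of $f$ (viewed via the inclusion $E \hookrightarrow \Baire$) gives $A \in \bSigma^0_\xi$. I would dispatch this in a sentence.

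For the forward direction, the strategy is to combine \cref{prop:change_of_topology} with the fact that every zero-dimensional Polish space is homeomorphic to a closed subset of $\Baire$. First, I would apply (a careful version of) \cref{prop:change_of_topology} to produce a Polish topology $\tau$ on $\Baire$, finer than the standard one and with the same Borel sets, in which $A$ is open. To make the rest of the argument go through, I will need two additional properties of $\tau$: ($\ast$) $\tau$ is zero-dimensional, and ($\ast\ast$) every $\tau$-open set is $\bSigma^0_\xi$ with respect to the standard topology. Both can be arranged by taking $\tau$ to be generated by the standard basic clopen sets together with a countable family of $\bDelta^0_\xi$ sets, chosen to witness that $A$ is $\bSigma^0_\xi$. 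Closure of $\bDelta^0_\xi$ under finite intersections, together with closure of $\bSigma^0_\xi$ under countable unions, yields ($\ast\ast$); the fact that each new subbasis element is both $\tau$-open and $\tau$-closed yields ($\ast$).

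Next I will fix a homeomorphism $\phi\colon (\Baire, \tau) \to E$ onto a closed set $E \subseteq \Baire$ (with its standard topology), and set $f = \phi$, now reading the domain with the standard topology. The three conditions then follow formally. For \textup{(ii)}, $f^{-1}\colon E \to \Baire$ factors as $\phi^{-1}\colon E \to (\Baire, \tau)$ followed by the identity $(\Baire, \tau) \to \Baire$; the first map is continuous by construction and the second because $\tau$ refines the standard topology. For \textup{(iii)}, since $A$ is $\tau$-open, $f(A) = \phi(A)$ is relatively open in $E$, hence of the form $B \cap E$ for some standard-open $B \subseteq \Baire$, and so $A = f^{-1}[B]$. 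For \textup{(i)}, for any standard-open $U \subseteq \Baire$, $f^{-1}[U] = \phi^{-1}[U \cap E]$ is $\tau$-open, and hence $\bSigma^0_\xi$ by ($\ast\ast$).

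The main obstacle I expect is arranging ($\ast$) and ($\ast\ast$) on top of the conclusions of \cref{prop:change_of_topology}. Concretely, this calls for an inductive refinement of the topology in which only $\bDelta^0_\xi$ sets are ever declared clopen, which in turn rests on an inductive description of $\bSigma^0_\xi$ sets built from $\bSigma^0_{\eta}$ sets with $\eta < \xi$. Once $\tau$ has been set up, the embedding of a zero-dimensional Polish space as a closed subset of $\Baire$ is standard, and the verification of \textup{(i)}--\textup{(iii)} is essentially bookkeeping.
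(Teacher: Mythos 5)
Your proof is correct, but it takes a genuinely different route from the paper's. The paper does not establish \cref{prop:Kuratowksi_Sierpinski} by refining the topology of the domain: it first proves the effective version (\cref{prop:effective_Kuratowski_Sierpinski:1}), observing that after a suitable recoding the iterated-jump map $x \mapsto x^{(\alpha)}$ is itself a $\bSigma^0_{1+\alpha}$-measurable bijection onto a closed subset of $\Baire$ with continuous inverse, and that $A\in\Sigma^0_{1+\alpha}$ exactly when $A=\{x : x^{(\alpha)}\in V\}$ for some effectively open $V$; the boldface statement then follows by relativising to a parameter $z$, using the universality of the maps $x\mapsto (x,z)^{(\alpha)}$ among $\bSigma^0_{1+\alpha}$-measurable functions. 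Note also that the paper's logical order is the reverse of yours: it deduces \cref{prop:change_of_topology} from \cref{prop:Kuratowksi_Sierpinski}, whereas you deduce \cref{prop:Kuratowksi_Sierpinski} from a strengthened change of topology (zero-dimensional, with a basis of standard-$\bDelta^0_\xi$ sets, so that every new open set is standard-$\bSigma^0_\xi$). That strengthening is genuinely more than \cref{prop:change_of_topology} asserts, so you cannot simply cite it; you must redo the induction with the extra bookkeeping you describe --- but this is exactly the classical Kuratowski--Sierpi\'nski argument (the ``fine topology'' appearing, e.g., in Kechris's proof of Hausdorff--Kuratowski), and your sketch of it is sound: write $A$ as a countable union of $\bDelta^0_\xi$ sets, declare these and their complements clopen, and check Polishness by the usual one-set-at-a-time lemmas. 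What the paper's route buys is the lightface refinement and a canonical witness ($E$ is the set of $\alpha$-jumps, $f$ the jump map), on which the true-stages machinery in the rest of the paper is built; what your route buys is a self-contained classical proof using no computability theory.
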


For a detailed account see \cite[Thm.IV.C.10]{Wadge:phd}. \Cref{prop:change_of_topology} can be directly deduced from \cref{prop:Kuratowksi_Sierpinski} by taking the topology on~$\Baire$ which makes~$f$ a homeomorphism between~$\Baire$ and~$E$; we know that a closed subset of~$\Baire$ is Polish. 

\smallskip

An effective proof of \cref{prop:Kuratowksi_Sierpinski} was observed by A.~Marks. For any $x\in \Baire$ and ordinal $\alpha<\w_1^x$ (i.e., $x$ computes a copy of~$\alpha$), we let $x^{(\alpha)}$ denote the the iteration of the Turing jump operator (the relativised halting problem) along~$\alpha$, starting with~$x$. Spector showed \cite{Spector:Uniqueness} that the Turing degree of $x^{(\alpha)}$ does not depend on the presentation of~$\alpha$ as an $x$-computable well-ordering. The degree of $x^{(\alpha)}$ is Turing complete for the $\Delta^0_{1+\alpha}(x)$ sets. Under standard definitions, $\{ x^{(\alpha)}\,:\, x\in \Baire \}$ is a $\Pi^0_2$ subset of Cantor space. It can, however, be pulled back to a $\Pi^0_1$ subset of Baire space: for each~$x$, $x^{(\alpha)}$ is a $\Pi^0_2(x)$ singleton, and each $\Pi^0_2(x)$ singleton in Cantor space is Turing equivalent to a $\Pi^0_1(x)$ singleton in Baire space. This is all uniform in~$x$. After this renaming, the function $x\mapsto x^{(\alpha)}$ is $\bSigma^0_{1+\alpha}$-measurable with closed image and continuous inverse, and the following holds:

\begin{proposition} \label{prop:effective_Kuratowski_Sierpinski:1}
    Let $\alpha <\wock$. A set $A\subseteq \Baire$ is $\Sigma^0_{1+\alpha}$ if and only if there is a $\Sigma^0_1$ set $V\subseteq \Baire$ such that $A = \left\{ x \,:\,  x^{(\alpha)}\in V \right\}$. 
\end{proposition}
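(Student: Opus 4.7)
The plan is to prove both directions by a transfinite Post's theorem for subsets of Baire space, using the observation already flagged in the excerpt that $x \mapsto x^{(\alpha)}$ is uniformly $\bSigma^0_{1+\alpha}$-measurable; equivalently, that every bit of $x^{(\alpha)}$ is uniformly $\Delta^0_{1+\alpha}(x)$, by unwinding the definition of the iterated jump along a computable copy of $\alpha$.

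For the reverse direction, I would write a $\Sigma^0_1$ set as $V = \bigcup\{[\sigma] : \sigma \in W\}$ for a c.e.\ set $W$ of strings. Then $x \in A$ if and only if some $\sigma \in W$ is an initial segment of $x^{(\alpha)}$, and querying any finite initial segment of $x^{(\alpha)}$ is uniformly $\Delta^0_{1+\alpha}(x)$, so this witnesses $A \in \Sigma^0_{1+\alpha}$.

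For the forward direction, I would proceed by transfinite induction on $\alpha < \wock$. The base case $\alpha = 0$ is immediate, taking $V = A$ and using $x^{(0)} = x$. At $\alpha \ge 1$, decompose $A = \bigcup_n B_n$ with each $B_n \in \Pi^0_{1+\beta_n}$ uniformly in $n$ and $\beta_n < \alpha$ (take $\beta_n = \beta$ uniformly for the successor case $\alpha = \beta+1$; read a suitable sequence of $\beta_n$'s off a computable presentation of $\alpha$ in the limit case, after minor bookkeeping to put the pieces in the required $\Pi^0_{1+\beta_n}$ form). Applying the inductive hypothesis to complements yields, uniformly in $n$, a $\Pi^0_1$ set $U_n$ with $B_n = \{x : x^{(\beta_n)} \in U_n\}$. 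The key point is that $x^{(\beta_n+1)} = (x^{(\beta_n)})'$ is uniformly $\Delta^0_1$ in $x^{(\alpha)}$ in either case: it is either $x^{(\alpha)}$ itself when $\alpha$ is a successor, or a uniformly accessible column of $x^{(\alpha)}$ under the standard join-presentation of the iterated jump at a limit. Since a $\Pi^0_1$ question about $x^{(\beta_n)}$ becomes $\Delta^0_1$ once one passes to its jump, each clause ``$x^{(\beta_n)} \in U_n$'' is uniformly $\Delta^0_1(x^{(\alpha)})$ in $n$. Hence $A = \{x : \exists n \; x^{(\beta_n)} \in U_n\}$ is $\Sigma^0_1$ in $x^{(\alpha)}$, producing the required $V$.

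The hard part will be this ``$\Delta$-upgrade'' step at the inductive stage: without it, unioning the $\Pi^0_1(x^{(\alpha)})$ sets $B_n$ would only land at the $\Sigma^0_2(x^{(\alpha)})$ level. The trick, which is exactly Post's theorem in miniature, is that the next jump $x^{(\beta_n+1)}$ decides the previous level's $\Pi^0_1$ questions, and this jump is available inside $x^{(\alpha)}$ at both successor and limit stages; verifying that this access is uniform in $n$ is the only real technical content of the argument.
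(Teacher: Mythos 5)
Your argument is correct, and it matches the paper's route: the paper states this proposition without proof, as a direct restatement of the (relativised, uniform) transfinite Post theorem --- that $x^{(\alpha)}$ uniformly computes exactly the $\Delta^0_{1+\alpha}(x)$ sets --- and your transfinite induction, with the ``jump decides the previous level's $\Pi^0_1$ questions'' upgrade at each step, is precisely the standard proof of that fact adapted to subsets of Baire space. The only point to keep honest is that the induction must be carried out as an effective transfinite recursion so that the indices for the $U_n$ (and hence the final $\Sigma^0_1$ set $V$) are obtained uniformly, which you already flag.
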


The relativised jump functions $x\mapsto (x,z)^{(\alpha)}$ are universal: a function $f\colon \Baire\to X$ is $\bSigma^0_{1+\alpha}$-measurable if and only if there is some oracle (parameter)~$z$ and some continuous funciton $g\colon \Baire\to X$ such that $f(x) = g((x,z)^{(\alpha)})$. Thus, \cref{prop:Kuratowksi_Sierpinski} can be deduced from its effective version \cref{prop:effective_Kuratowski_Sierpinski:1}. We can also deduce an effective version of \cref{prop:change_of_topology}. For every oracle~$z$ and every $\alpha < \w_1^z$, let the \emph{$(z,\alpha)$-topology} on~$\Baire$ be the topology generated by the $\Sigma^0_{1+\alpha}(z)$ sets. It extends the standard topology on~$\Baire$, and has the same Borel sets. \Cref{prop:effective_Kuratowski_Sierpinski:1} implies the following, which in turn implies \cref{prop:change_of_topology}:

\begin{proposition} \label{prop:the_z_xi_topology_is_Polish}
    For every $z$ and every $\alpha< \w_1^z$, the $(z,\alpha)$-topology is Polish. 
\end{proposition}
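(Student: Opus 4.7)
The plan is to exhibit the $(z,\alpha)$-topology on $\Baire$ as homeomorphic to a closed subspace of $\Baire$ with its standard topology, via the relativised jump map. Define $j\colon \Baire \to \Baire$ by $j(x) = (x,z)^{(\alpha)}$, after the $\Pi^0_2$-to-$\Pi^0_1$ renaming described in the paragraph preceding \cref{prop:effective_Kuratowski_Sierpinski:1}, so that $j$ has closed image $E := j[\Baire]$ and continuous inverse $j^{-1}\colon E \to \Baire$. Since $\alpha < \w_1^z$, the version of \cref{prop:effective_Kuratowski_Sierpinski:1} relativised to $z$ applies, yielding: a set $A \subseteq \Baire$ is $\Sigma^0_{1+\alpha}(z)$ if and only if $A = j^{-1}[V]$ for some open $V \subseteq \Baire$.

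The collection of $\Sigma^0_{1+\alpha}(z)$ sets is closed under countable unions and finite intersections and contains $\Baire$, so it is already a topology on $\Baire$; by definition this is the $(z,\alpha)$-topology. The characterisation above then says that the open sets of the $(z,\alpha)$-topology are exactly the preimages under $j$ of open subsets of $\Baire$ -- equivalently, the preimages of the relatively open subsets of $E$. Since $j$ is a bijection from $\Baire$ onto $E$, this makes $j$ a homeomorphism from $\Baire$, equipped with the $(z,\alpha)$-topology, onto $E$ with the subspace topology inherited from the standard topology on $\Baire$.

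To conclude: $E$ is a closed subspace of the Polish space $\Baire$, hence is itself Polish. Being homeomorphic to $E$, the space $(\Baire, (z,\alpha)\text{-topology})$ is Polish. The only delicate point is ensuring that $j$ is an honest bijection onto a closed set with continuous inverse -- but this is precisely what the renaming step preceding \cref{prop:effective_Kuratowski_Sierpinski:1} provides, and all the remaining work is formal.
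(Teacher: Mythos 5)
Your proof is correct in substance, and it fleshes out the route the paper only asserts in passing (``\cref{prop:effective_Kuratowski_Sierpinski:1} implies the following''), whereas the paper's explicit derivation, given in \cref{sub:enter_true_stages} for $z=\emptyset$, is different in presentation: it works directly with the true stages tree, defining the metric $d_\alpha(x,y)=2^{-|\s|}$ where $\s$ is the longest string with $\s\prec_\alpha x,y$, and observing via \ref{TSP:Sigma_alpha_sets} that the $(z,\alpha)$-open sets are exactly the sets $[U]^\prec_\alpha$. The two arguments are morally the same --- both exhibit the new topology as the path-space topology of a closed subset of Baire space, and the tree $(\w^{<\w},\preceq_\alpha)$ is essentially a combinatorial presentation of your map $j$ --- but yours goes through the jump function and \cref{prop:effective_Kuratowski_Sierpinski:1} as a black box, while the paper's stays inside the machinery and hands you the compatible metric explicitly (which is then reused later).

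One inaccuracy worth fixing: the collection of lightface $\Sigma^0_{1+\alpha}(z)$ sets is \emph{not} literally a topology. It is closed under \emph{effective} countable unions, but an arbitrary union of lightface sets indexed by a non-computable set of indices need not be lightface (it is only boldface $\bSigma^0_{1+\alpha}$, or lightface relative to a stronger oracle). So the $(z,\alpha)$-topology, being the \emph{generated} topology, properly contains the lightface class. This does not damage your argument: since each basic clopen $[\tau]$ is lightface, $j^{-1}[V]$ for arbitrary open $V=\bigcup_n[\tau_n]$ is a countable union of $\Sigma^0_{1+\alpha}(z)$ sets and hence $(z,\alpha)$-open, and conversely every $(z,\alpha)$-open set is a countable union of sets of the form $j^{-1}[V_n]$ with $V_n$ effectively open, hence equals $j^{-1}\bigl[\bigcup_n V_n\bigr]$. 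So the identification of the $(z,\alpha)$-open sets with the $j$-preimages of open sets survives, and the homeomorphism onto the closed set $E$ goes through as you claim.
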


\subsection{Enter true stages} 
\label{sub:enter_true_stages}

Baire space is the set of infinite paths through the tree $(\w^{<\w},\preceq)$. A metric witnessing that~$\Baire$ is Polish is derived from the tree: $d(x,y) = 2^{-|\s|}$, where $\s$ is the greatest common initial segment of~$x$ and~$y$ on the tree. Our first application of the true stages machinery will be an extension of this idea to the $(z,\alpha)$-topology in an internally coherent fashion. For simplicity of notation, we state the unrelativised version ($z = \emptyset$). 

\medskip

The true stages machinery provides, for each computable ordinal $\alpha < \wock$, a partial ordering $\preceq_\alpha$ on $\w^{\le \w}$ with a variety of useful properties. 
We will list the properties that we will need as they become relevant. We start with the following four. 
    \begin{TSPlist}
        \item \label{TSP:extends_prec} For $\s,\tau\in \w^{\le \w}$, $\s\preceq_\alpha \tau$ implies $\s\preceq \tau$. For $\alpha=0$, $\s\preceq_0 \tau\Iff \s\preceq\tau$.

        \item  \label{TSP:tree}
        $(\w^{\le \w}, \preceq_\alpha)$ is a tree: for all $\tau\in \w^{\le \w}$, $\left\{ \s \,:\,  \s\preceq_\alpha \tau \right\}$ is linearly ordered; the root of the tree is $\emptystring$ (the empty sequence). 


        \item \label{TSP:unique_path}
        For every $x\in \Baire$, $\left\{ \s\in \w^{<\w} \,:\, \s\prec_\alpha x   \right\}$ is the unique infinite path of the restriction of $\preceq_\alpha$ to $\left\{ \s\in \w^{<\w} \,:\,  \s\prec x \right\}$.

        \item \label{TSP:Sigma_alpha_sets}
        A set $A\subseteq \Baire$ is $\Sigma^0_{1+\alpha}$ if and only if there is a c.e.\ set $U\subseteq \w^{<\w}$ such that 
        \[
            A = [U]^{\prec}_{\alpha} = \left\{ x\in \Baire \,:\,  (\exists \s\in U)\,\,\s\prec_\alpha x \right\}.
        \]
    \end{TSPlist}

\Cref{prop:the_z_xi_topology_is_Polish} for $z=\emptyset$ follows: we let $d_\alpha(x,y) = 2^{-|\s|}$, where~$\s$ is the longest string satisfying $\s\prec_\alpha x, y$. The open sets of the $(\emptyset,\alpha)$-topology are precisely the sets $[U]^\prec_\alpha$ for any $U\subseteq \w^{<\w}$. \ref{TSP:extends_prec} implies that this generalises the usual topology. In general, for each oracle~$z$, we can relativise the machinery to~$z$ and obtain, for each $\alpha < \w_1^z$, a partial ordering $\preceq^z_\alpha$ with the same properties, except that in \ref{TSP:Sigma_alpha_sets} we replace c.e.\ by $z$-c.e.\ and $\Sigma^0_{1+\alpha}$ by $\Sigma^0_{1+\alpha}(z)$.

\smallskip

As this is an expository paper, we will relegate to the companion paper \cite{companionpaper} the details of the construction of these partial orderings and the verification of their properties. Montalb\'an first developed his true stages machinery in \cite{Montalban:TrueStages:paper}, based on his work with Marcone in \cite{MarconeMontalban:Veblen} on the Veblen functions and the iterated Turing jump. Greenberg and Turetsky then gave an alternative development in \cite{GreenbergTuretsky:Pi11}. Both of these were restricted to $x = 0^\infty$. Day, Downey and Westrick, and Greenberg and Turetsky independently observed that both of these versions can be extended to all of Baire space. 

Let us informally describe the main ideas. The relations $\preceq_\alpha$ are defined by recursion on~$\alpha$: we first need to define $\preceq_\beta$ for all $\beta<\alpha$. To make the construction work, these relations need to be internally coherent, for example, they are nested and continuous:
\begin{TSPlist}[resume]
    \item \label{TSP:nested}
    If $\alpha\le \beta$, then $\s\preceq_\beta \tau$ implies $\s\preceq_\alpha \tau$; 

    \item \label{TSP:continuous}
    If $\lambda$ is a limit ordinal, then $\s\preceq_\lambda \tau$ if and only if for all $\alpha< \lambda$, $\s\preceq_\alpha \tau$. 
\end{TSPlist}

The idea is to define, for each \emph{finite} sequence~$\s$, a finite sequence $\s^{(\alpha)}$, which is $\s$'s guess of an initial segment of the $\alpha$-jump of infinite sequences~$x$ extending~$\s$. This guess is independent of the choice of $x\succ \s$. The guess may be correct for some choices of~$x$ and incorrect for others. Roughly, we let $\s\prec_\alpha x$ if $\s^{(\alpha)}\prec x^{(\alpha)}$, i.e., when~$\s$'s guess is correct for~$x$; we say that such a $\s$ is an \emph{$\alpha$-true} initial segment of~$x$. One of the main ideas, though, is the extension of this relation to a relation between two \emph{finite} sequences $\s\preceq \tau$. Again, the idea is to let $\s\preceq_\alpha \tau$ if $\s^{(\alpha)}\preceq \tau^{(\alpha)}$: the guesses of~$\s$ and~$\tau$ about the $\alpha$-jump do not contradict each other. We say that~$\s$ \emph{appears to be $\alpha$-true} to~$\tau$. The terminology ``true \emph{stage}'' comes from the application to computable constructions, as we discuss below. The idea is that in an $x$-computable construction, $x\rest{s}$ is the stage~$s$ information we have, and $(x\rest{s})^{(\alpha)}$ is our stage~$s$ guess about $x^{(\alpha)}$; $s$ is an $(x,\alpha)$-true stage if $x\rest{s}\prec_\alpha x$. 

This definition of $\preceq_\alpha$ is not quite right. It needs to be modified in order to deal with limit ordinals. In this sketch we will ignore this modification; the definition  $\s\preceq_\alpha \tau \Iff \s^{(\alpha)}\preceq \tau^{(\alpha)}$ conveys the main idea. The nested condition \ref{TSP:nested} results from the fact that since the Turing jump operator is defined by recursion on the ordinal, in order to compute $\s^{(\beta)}$, we need to first, in some sense, compute $\s^{(\alpha)}$ for all $\alpha<\beta$. \ref{TSP:continuous} reflects the fact that $x^{(\lambda)}$ is Turing equivalent to the infinite join $\bigoplus_{\alpha<\lambda}x^{(\alpha)}$. 

The main difference between the developments in \cite{Montalban:TrueStages:paper} and \cite{GreenbergTuretsky:Pi11} is that the former first defines the strings~$\s^{(\alpha)}$ and then modifies the resulting relations~$\preceq_\alpha$, whereas the latter gives a simultaneous inductive definition of both~$\s^{(\alpha)}$ and~$\preceq_\alpha$. 

\subsubsection{A single step}
Here we sketch out a single step, i.e., the construction of $\s^{(\alpha+1)}$ from $\s^{(\alpha)}$.  The reader not interested in peeking under the hood may safely skip down to~\ref{TSP:p_function}.

For each~$x$ and~$\alpha$, $x^{(\alpha+1)}$ is an element of Baire space which is Turing equivalent to $(x^{(\alpha)})'$, the halting problem relative to $x^{(\alpha)}$. Fix a universal oracle Turing machine~$M$ which enumerates the jump: for all~$x$ and~$e$, $e\in x'$ if and only if $M^x(e)\converge$, i.e., the machine~$M$ with oracle~$x$ halts on input~$e$. For a finite~$\s$, we let $\s'$ be the collection of~$e$ such that $M^\s_{|\s|}(e)\converge$, i.e., those~$e$ for which the machine~$M$, with oracle~$\s$, halts on input~$e$ in at most~$|\s|$ many steps. 

For each finite sequence $\s\in \w^{<\w}$, the sequence $\s^{(\alpha+1)}$ will encode a finite initial segment of $(\s^{(\alpha)})'$: for some $p_\alpha(\s)\in \Nat$, for all $e<p_\alpha(\s)$, $\s^{(\alpha+1)}$ tells us whether $e\in (\s^{(\alpha)})'$ or not. The idea is the following: if $e\in (\s^{(\alpha)})'$ then~$\s$ can be confident that~$e$ is in the jump -- for all infinite~$x$, if $\s\prec_\alpha x$ then $e\in (x^{(\alpha)})'$. For $e<p_\alpha(\s)$ such that $e\notin (\s^{(\alpha)})'$, $\s$ is sufficiently brave to declare that $e\notin (x^{(\alpha)})'$ for $x\succ_\alpha \s$; it may be correct about some such~$x$'s, but not about others. For $e\ge p_\alpha(\s)$, $\s$ makes no commitment about $(x^{(\alpha)})'(e)$. 

If~$\s\prec_\alpha \tau$, then as $\s^{(\alpha)}\preceq \tau^{(\alpha)}$, we have $(\s^{(\alpha)})'\subseteq (\tau^{(\alpha)})'$: if the machine~$M$ with oracle $\s^{(\alpha)}$ halts on~$e$ in at most $|\s^{(\alpha)}|$ many steps, then this also holds when the oracle is extended to $\tau^{(\alpha)}$ (and more steps are allowed). We let $\s\preceq_{\alpha+1} \tau$ if $\s\prec_\alpha \tau$ and~$\tau$ has no proof that~$\s$ was wrong about $(\s^{(\alpha)})'\rest{p_\alpha(\s)}$: if for all $e<p_\alpha(\s)$, if $e\notin (\s^{(\alpha)})'$ then also $e\notin (\tau^{(\alpha)})'$. 

Fixing $x\in \Baire$, our goal is to ensure, for each~$\alpha$, that:
\begin{description}
    \item[($*$)] $x^{(\alpha)} = \bigcup \left\{ \s^{(\alpha)} \,:\, \s\prec_\alpha x   \right\}$. 
\end{description}
That is, there are infinitely many $\alpha$-true initial segments of~$x$, and as they get longer, they give us more and more information about $x^{(\alpha)}$. Suppose that~($*$) is known for some~$\alpha$. To ensure that~($*$) also holds for~$\alpha+1$, we need to define $p_\alpha(\s)$ wisely. The main idea is that of the ``non-definciency stages'' of Dekker, in his construction of a hypersimple set in every c.e.\ degree \cite{Dekker:Hypersimple}. We fix a enumerations of the jump sets~$\s'$ so that if $\s\preceq \tau$ then the enumeration of~$\tau'$ extends that of~$\s'$; and we let $p_\alpha(\s)$ be the last number enumerated into~$(\s^{(\alpha)})'$. So~$\s$ observes that the number $p = p_\alpha(\s)$ has just entered $(\s^{(\alpha)})'$; it thinks that no smaller numbers will enter the jump later, but is not willing to give an opinion about larger ones. To verify~($*$), for any $\s\prec_\alpha x$, let $k$ be the least element of $(x^{(\alpha)})'\setminus (\s^{(\alpha)})'$. By~($*$) for~$\alpha$, for sufficiently long $\tau\prec_\alpha x$ we have $k\in (\tau^{(\alpha)})'$; the least such~$\tau$ will have $k = p_\alpha(\tau)$ and so $\tau \prec_{\alpha+1}x$. 

While the actual details are a little different, we can now state another important property of the true stages machinery:
\begin{TSPlist}[resume]
     \item \label{TSP:p_function}
     There are functions $p_\alpha\colon \w^{<\w}\to \Nat$ such that for all $\s,\tau\in \w^{<\w}$, $\s\prec_{\alpha+1}\tau$ if and only if $\s\prec_\alpha\tau$ and for all finite~$\rho$ satisfying $\s\prec_\alpha \rho\preceq_{\alpha}\tau$ we have $p_\alpha(\rho)\ge p_\alpha(\s)$. 
 \end{TSPlist}  

($*$) can also explain \ref{TSP:Sigma_alpha_sets}; we give a sketch.

\begin{proof}[Sketch of derivation of \ref{TSP:Sigma_alpha_sets}:]
    In one direction, let $A$ be $\Sigma^0_{1+\alpha}$; let~$V$ be given by \cref{prop:effective_Kuratowski_Sierpinski:1}. There is a c.e.\ set of strings~$V_0$, closed under taking extensions, that generates~$V$ as an open set. By~($*$),  $U = \left\{ \s\in \w^{<\w} \,:\, \s^{(\alpha)}\in  V_0  \right\}$ is as required for \ref{TSP:Sigma_alpha_sets}. In the other direction, for each~$x$, $\left\{ \s \,:\,  \s\prec_\alpha x \right\}$ is $x^{(\alpha)}$-computable, uniformly in~$x$. That is, there is some~$\Delta^0_1$ set $W\subset \w^{<\w}\times \Baire$ such that $\s\prec_\alpha x \Iff (\s,x^{(\alpha)})\in W$. \Cref{prop:effective_Kuratowski_Sierpinski:1} then implies that $[\s]^\prec_\alpha$ is $\Delta^0_{1+\alpha}$, uniformly in~$\s$. We mention that we have implicitly used \ref{TSP:computable}, stated below. We also remark that \ref{TSP:Sigma_alpha_sets} is uniform: we can effectively pass from $\Sigma^0_{1+\alpha}$ indices of~$A$ to c.e.\ indices of~$U$.
\end{proof}

What we have not discussed so far is how to ensure that~($*$) holds for limit ordinals~$\alpha$. This is, in fact, the most difficult aspect of the construction of the true stages machinery. In \cite{GreenbergTuretsky:Pi11}, this is solved by the particular encoding of the iterated jumps into $\s^{(\alpha)}$; a kind of diagonal intersection argument is used. We will further discuss limit levels in the next section.  

For a final remark, we observe that like the \emph{set} $x^{(\alpha)}$ (and unlike its Turing degree), the partial orderings $\preceq_\alpha$ actually depend on the choice of a computablewell-ordering of~$\Nat$ of order-type~$\alpha$. For this reason, we cannot define~$\preceq_\alpha$ in a way which will satisfy \ref{TSP:nested},\ref{TSP:continuous} and other nice properties for all computable ordinals at once. In \cite{GreenbergTuretsky:Pi11}, this obstacle is overcome by an overspill argument, in which the true stages machinery is applied to a pseudo ordinal $\delta^*>\wock$. The price to pay then is having to ensure that these pseudo ordinals do not interfere with the intended construction.

\subsubsection{Iterated priority arguments} 
\label{ssub:iterated_priority_argument}

In order to apply true stages to iterated priority arguments, which are computable constructions, we require:
\begin{TSPlist}[resume]
    \item \label{TSP:computable}
     The restriction of the relations $\preceq_\alpha$ to $\w^{<\w}$ (i.e., to \emph{finite} sequences) is computable. So is the map $\s\mapsto \s^{(\alpha)}$ for finite~$\s$, and the function~$p_\alpha$ of \ref{TSP:p_function}. 
\end{TSPlist}

Note that in contrast, for each infinite~$x$, the relation $\s\prec_\alpha x$ is $\Delta^0_{1+\alpha}(x)$,  and cannot be any simpler. 

\smallskip

\ref{TSP:computable} allows us to use the guesses $\s^{(\alpha)}$ during a computable construction, and also observe, at every stage $t<\omega$, the opinion at stage~$t$ about the $\alpha$-truth of previous stages. 

Let us sketch how this is used in computable structure theory. As was our description of the development of the true stages machinery, this will be a very rough sketch, and it will not be essential for the remainder of the paper. Here we construct a single computable structure, so we apply the true stages machinery (as it was originally devised) to $x = 0^\infty$. For $s,t\le \w$, we write $s\le_\alpha t$ to denote $0^s\preceq_\alpha 0^t$, and say that~$s$ appears to be $\alpha$-true at~$t$; when $t = \w$, we say that $s$ is $\alpha$-true. 

Let~$\delta$ be a computable ordinal. Suppose that we wish to construct a computable structure $\+M$ and at the same time encode some $\emptyset^{(\delta)}$-computable information into the the computable $\Pi^0_\delta$-diagram of~$\+M$ (the relations on~$\+M$ defined by the computable $\forall_\delta$-fragment of $\+L_{\w_1,\w}$ in the language of~$\+M$). For example, in the Ash-Watnick theorem mentioned above, we are given a $\emptyset^{(2\delta)}$-computable linear ordering~$\+L$, and we need to construct a linear ordering~$\+M$ of order-type $\Int^\alpha\cdot \+L$; the $\Pi^0_{2\delta}$ relation that interests us is whether two elements of~$\+M$ are in the same copy of $\Int^\delta$, i.e., whether they are identified after applying the Hausdorff derivative (identify points which are finitely far apart)~$\delta$ many times.\footnote{The ordinal $2\delta$ is not quite correct; for $\delta\ge \w$ we need $\emptyset^{(2\delta+1)}$ to compute the iteration of the Hausdorff derivative of length~$\delta$. A modification of the true stages machinery is needed to overcome this problem, so that for levels $\alpha\ge \w$, the $\alpha$-true stages actually compute $\emptyset^{(\alpha+1)}$ rather than just $\emptyset^{(\alpha)}$. This is done while maintaining \ref{TSP:continuous}. The modification is undesirable for applications to descriptive set theory, as it makes \ref{TSP:Sigma_alpha_sets} fail at limit levels. Technically, the modification does not have \ref{TSP:limit} stated below.}  

During the construction, at each stage $s<\w$, we construct not only a finite substructure $\+M_s$ of the intended computable structure~$\+M = \+M_\w$, but also an approximation $\+D^\delta_s$ of the $\Pi^0_\delta$ diagram of~$\+M$. We use our guess $\emptyset_s^{(\delta)} = (0^s)^{(\delta)}$ of~$\emptyset^{(\delta)}$ to determine which statements to include in $\+D_s^\delta$. 

We require that $\+M_s\subseteq \+M_t$ when $s\le t$. Since the construction is computable, the sequence $\smallseq{\+M_s}$ is computable, and so $\+M = \+M_\w = \bigcup_s \+M_s$ is a computable structure, as required. Further, and this is the key point, we ensure that for $s \le_\delta t$, $\+D^\delta_s \subseteq \+D^\delta_t$. What is useful to us at the end is that when $s<_\delta \w$ ($s$ is a  $\delta$-true stage), $\+D^\delta_s \subset \+D^\delta_\w$, the latter being the true $\Pi^0_\delta$-diagram of~$\+M = \+M_\w$. For such~$s$, the choices we make for $\+D^\delta_s$ are correct, since our guess $\emptyset^{(\delta)}_s$ about $\emptyset^{(\delta)}$ is correct. Further, since $\emptyset^{(\delta)} = \bigcup \{\emptyset^{(\delta)}_s\,:\, s<_\delta \w\}$ (every $\delta$-correct piece of information is eventually revealed at $\delta$-true stages), we get $\+D^\delta_\w = \bigcup \{ \+D^\delta_s   \,:\, s<_\delta \w\}$. That is, every $\Pi^0_\delta$ fact is eventually correctly decided on the $\delta$-true stages. Note that the entire construction is computable: the map $s\mapsto \emptyset^{(\delta)}_s$, and so the map $s\mapsto \+D^\delta_s$, are computable. The reason that $\+D^\delta_\w$ is not computable is that $\{s\,:\, s<_\delta \w\}$ is not computable; the complexity is encoded in the set of $\delta$-true stages. Since that set is not computable, we need to ensure that $\+D^\delta_s\subseteq \+D^\delta_t$ when $s\le_\delta t$, even if~$s$ is not~$\delta$-true. 

One might wonder how we ensure that $\bigcup \left\{ \+D^\delta_s \,:\,  s<_\delta \w \right\}$ is in fact the $\Pi^0_\delta$-diagram of~$\+M$. For example, at $\delta$-true stages of the Ash-Watnick construction, we may declare that two elements~$a$ and~$b$ of~$\+M$ are in the same copy of $\Int^\delta$ (or not). How do we ensure that this declaration is in fact correct in the structure~$\+M$? Note that at each stage~$s$, $\+M_s$ is a finite linear ordering, so the declaration at that stage is just that: a declaration of intention, a promise about how the structure will be built from now on. For this purpose, we in fact not only make promises about the $\Pi^0_\delta$-diagram, but for all $\alpha\le \delta$, we give an approximation $\+D^\alpha_s$ of the $\Pi^0_\alpha$-diagram of~$\+M$. In the Ash-Watnick example, at level $2\alpha$ we declare which points are identified after~$\alpha$ many iterations of the Hausdorff derivative; at level $2\alpha+1$ we decide the successor relation on the $\alpha\tth$-derivative ordering. Each $\+D^\alpha_s$ is decided based on the approximation $\emptyset^{(\alpha)}_s$ of $\emptyset^{(\alpha)}$. The $\Pi^0_{\alpha+1}$-diagram of a structure can be recovered from the $\Pi^0_\alpha$-diagram; at every stage $s$, we build $\+D^\alpha_s$ so that $\+D^{\alpha+1}_s$ is consistent with the diagram recovered in this fashion from $\+D^\alpha_s$. We then let $\+D^\alpha_\w = \bigcup \left\{ \+D^\alpha_s \,:\,  s<_\alpha \w \right\}$, and inductively show that $\+D^\alpha_\w$ is indeed the correct diagram at its level. (We are eliding how limit levels are dealt with.) 

The overall resulting requirement for the construction is: for all $\alpha\le\delta$, if $s\le_\alpha t$ then $\+D^\alpha_s \subseteq \+D^\alpha_t$. (The requirement $\+M_s\subseteq \+M_t$ is incorporated into this, as $\+D^0_s$ is essentially the atomic diagram of~$\+M_s$; and $s\le_0 t$ iff $s\le t$.) As $\le_{\alpha+1}$ branches more than $\le_\alpha$, we will sometimes have the following scenario: $r <_\alpha s <_\alpha t$ with $r <_{\alpha+1} t$ and $s \not <_{\alpha+1} t$.  Then we must have $\+D^\alpha_r \subseteq \+D^\alpha_s \subseteq \+D^\alpha_t$; however $\+D^\alpha_s$ was built to support $\+D^{\alpha+1}_s$, and it may be that $\+D^{\alpha+1}_s \not \subseteq \+D^{\alpha+1}_t$.  So our construction must be such that our work for level $\alpha$ at stage $s$ can be folded into our work at stage $t$.  Arguing this is generally the main labor for a true stages priority argument.  One advantage we have is the following property, denoted $(\Club)$ by Montalb\'an:
\begin{TSPlist}
    \item[($\Club$)]  \label{TSP:Club}
    If $\sigma_0 \preceq_{\alpha} \sigma_1 \preceq_\alpha \sigma_2$ and $\sigma_0 \preceq_{\alpha+1} \sigma_2$, then $\sigma_0 \preceq_{\alpha+1} \sigma_1$.
\end{TSPlist}
In our scenario, it follows that $r <_{\alpha+1} s$, and so $\+D_r^{\alpha+1} \subseteq \+D_s^{\alpha+1}$, so the work for~$\alpha$ at stage~$s$ at least was not violating $\+D_r^{\alpha+1}$. The property  \RefClub{} follows immediately from \ref{TSP:p_function}. Informally, if $\s_0^{(\alpha)}\preceq \s_1^{(\alpha)} \preceq \s_2^{(\alpha)}$, and $\s_2$ has no evidence that~$\s_0$ was wrong about $(\s_0^{(\alpha)})'$, then~$\s_1$ cannot have any such evidence either.

\section{Analysis of the ambiguous Borel classes}
\label{sec:Delta_classes}

In this section we show how the true stages machinery allows us to give intuitive proofs of two theorems analysing the structure of the classes $\bDelta^0_\xi$, in terms of how they are built from lower-level classes. For successor~$\xi$, the Hausdorff-Kuratowski theorem gives an answer in terms of the Hausdorff difference hierarchy. For limit~$\xi$, Wadge gave an answer involving iterated partitioned unions.

\subsection{The Hausdorff-Kuratowski theorem} 
\label{sub:the_hausdorff_kuratowski_theorem}

The Hausdorff difference hierarchy (sometimes also named after Lavrentiev) is a transfinite extension of a hierarchy of finite Boolean operations. 

\begin{definition} \label{def:difference_hierarchy}
Let $\bGamma$ be a boldface pointclass and let $\eta\ge 1$ be a countable ordinal. We let $D_\eta(\bGamma)$ be the class of all sets of the form 
\[
A = \bigcup \Big\{   (A_i \setminus \bigcup_{j < i} A_j) \, : \, i < \eta \andd \text{parity}(i) \neq \text{parity}(\eta)\Big\} 
\]
where $\seq{A_{i}}_{i<\eta}$ is an increasing sequence of sets from~$\bGamma$. 
\end{definition}
Thus, $D_1(\bGamma) = \bGamma$, $D_{\eta+1}(\bGamma)$ is the collection of sets of the form $A\setminus B$ where $A\in \bGamma$ and $B\in D_\eta(\bGamma)$, and for limit~$\eta$, $D_\eta(\bGamma)$ is the collection of sets of the form $\bigcup_{i<\eta} (A_{2i+1}\setminus A_{2i})$ where $\seq{A_i}_{\alpha<\eta}$ is as in the definition. 

We similarly define $D_\eta(\Gamma)$ for lightface pointclasses; here we require that the sequence $\seq{A_i}$ be uniformly in~$\Gamma$. As with the relations~$\prec_\alpha$, the class will actually depend on the choice of a computable copy of~$\eta$. The computability-theoretic analogue of the Hausdorff difference hierarchy is the Ershov hierarchy \cite{Ershov2}, which has the same definition, where~$\Gamma$ is the class of c.e.\ subsets of~$\Nat$. Ershov used the notation $\Sigma^{-1}_{\eta}$ for $D_\eta(\Sigma^0_1(\Nat))$. 

Hausdorff \cite[VIII.4]{Haudorff:Grundzige:49} showed that $\bDelta^0_2 = \bigcup_{\eta<\w_1} D_\eta(\bSigma^0_1)$; Kuratowski \cite[37.III]{Kuratowksi:Topology} then used \cref{prop:Kuratowksi_Sierpinski} to prove:

\begin{theorem}[Hausdorff-Kuratowski] \label{thm:Hausdorff-Kuratowski}
    For all $1\le \xi<\w_1$,
     \[
     \bDelta^0_{\xi+1} = \bigcup_{\eta<\w_1}D_\eta(\bSigma^0_\xi).
     \]
\end{theorem}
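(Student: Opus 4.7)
My plan begins with the easy direction. The inclusion $\bigcup_\eta D_\eta(\bSigma^0_\xi) \subseteq \bDelta^0_{\xi+1}$ is routine: a set produced from an increasing sequence $\seq{A_i}_{i<\eta}$ of $\bSigma^0_\xi$ sets via the formula in \Cref{def:difference_hierarchy} is a countable union of differences of $\bSigma^0_\xi$ sets, hence $\bSigma^0_{\xi+1}$, and its complement arises from the same sequence with the opposite parity convention, so it is also $\bSigma^0_{\xi+1}$. For the hard inclusion $\bDelta^0_{\xi+1} \subseteq \bigcup_\eta D_\eta(\bSigma^0_\xi)$, given $A \in \bDelta^0_{\xi+1}$ I would choose $\alpha < \w_1$ with $1+\alpha = \xi$ (so $\alpha = \xi-1$ for finite $\xi$ and $\alpha = \xi$ otherwise) and an oracle $z$ with $A \in \Delta^0_{1+\alpha+1}(z)$, pass to the $z$-relativized true stage relations $\preceq^z_\beta$ (written just $\preceq_\beta$ below), and apply \ref{TSP:Sigma_alpha_sets} to $A$ and to its complement. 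This yields c.e.\ sets $U_+, U_- \subseteq \w^{<\w}$ with
\[
A = [U_+]^\prec_{\alpha+1}, \qquad \Baire \setminus A = [U_-]^\prec_{\alpha+1},
\]
and after a trivial thinning I may assume that no $\sigma \in U_+$ has a $\prec_{\alpha+1}$-predecessor in $U_-$, and vice versa, since otherwise some $x$ would lie in both $A$ and $A^c$.

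The central step is to assign each $\sigma \in \w^{<\w}$ an opinion $s(\sigma) \in \{+,-,?\}$ coming from the maximal $\preceq_{\alpha+1}$-predecessor of $\sigma$ (if any) lying in $U_+ \cup U_-$, and to attach an ordinal rank $\rho(\sigma) < \w_1$ measuring the nesting depth of opinion flips in the $\preceq_\alpha$-subtree above $\sigma$. \ref{TSP:p_function} is what drives the analysis: the opinion carried by $\tau$ survives to $\sigma \succ_\alpha \tau$ precisely when $p_\alpha$ does not drop below $p_\alpha(\tau)$ on the $\preceq_\alpha$-chain from $\tau$ to $\sigma$, and \RefClub{} organizes the resulting flips into a well-founded tree. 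Once the ranks are in place I would let
\[
B_\beta = \{ x \in \Baire : \text{some $\sigma \prec_\alpha x$ with $s(\sigma) = +$ satisfies $\rho(\sigma) < \beta$} \},
\]
which is $\bSigma^0_{1+\alpha}(z) \subseteq \bSigma^0_\xi$ by \ref{TSP:Sigma_alpha_sets}, define an analogous sequence at sign $-$ interleaved appropriately, and verify by direct unpacking of the opinion-flip analysis that $A$ is recovered from the $B_\beta$ via the difference-hierarchy formula.

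The principal obstacle is showing that the ranks $\rho(\sigma)$ are bounded by a single countable ordinal $\eta$. Pointwise well-foundedness along each $x \in \Baire$ is immediate from $A \cup A^c = \Baire$: an infinite descending chain of opinion flips on the $\preceq_\alpha$-path of $x$ would prevent any $\prec_{\alpha+1}$-true witness in $U_+ \cup U_-$ from being decisive, contradicting $x \in [U_+]^\prec_{\alpha+1} \cup [U_-]^\prec_{\alpha+1}$. Lifting this to a uniform bound on the countable tree $\w^{<\w}$ is the delicate part, and I would expect to handle it by a standard rank-bounding argument. A cleaner fallback that sidesteps the bookkeeping is to invoke \Cref{prop:the_z_xi_topology_is_Polish}: the $(z,\alpha)$-topology is Polish, $A$ is $\bDelta^0_2$ in it, and $\bSigma^0_1$ in this topology coincides with $\bSigma^0_{1+\alpha}(z) \subseteq \bSigma^0_\xi$, so the $\xi = 1$ case of Hausdorff's theorem applied in the new Polish topology yields the conclusion immediately.
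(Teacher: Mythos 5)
Your main argument is, in essence, the paper's own proof: it relativizes, applies \ref{TSP:Sigma_alpha_sets} at level $\alpha+1$ to $A$ and to its complement, converts the resulting ``opinions'' into an $\alpha$-approximation of $1_A$ stabilizing along $\prec_\alpha$-true initial segments via \RefClub{} (this is \cref{prop:alpha-limit_lemma}), and reads off the difference-hierarchy witness from a rank function on the well-founded tree of opinion changes (this is \cref{prop:D_eta_level_and_approximations} together with the proof of \cref{thm:effectiveHausdorff-Kuratowski}). Two small remarks: the ``principal obstacle'' you identify is not actually delicate --- the opinion-flip tree is a single countable well-founded tree, so its rank (or, effectively, its Kleene--Brouwer order type) immediately supplies the countable $\eta$ --- and your change-of-topology fallback is precisely Kuratowski's classical reduction to Hausdorff's $\xi=1$ case, which the paper notes historically but deliberately does not use.
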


See also \cite[Thm.22.27]{Kechris:book}. The following effective version of the Hausdorff-Kuratwoski theorem implies \cref{thm:Hausdorff-Kuratowski} by relativising to an oracle. 

\begin{theorem} \label{thm:effectiveHausdorff-Kuratowski}
  For all computable $\xi\ge 1$,   
  \[
     \Delta^0_{\xi+1} = \bigcup_{\eta<\wock}D_\eta(\Sigma^0_\xi).
     \]
\end{theorem}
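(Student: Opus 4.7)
The easy inclusion $\bigcup_{\eta<\wock} D_\eta(\Sigma^0_\xi) \subseteq \Delta^0_{\xi+1}$ I would prove by transfinite induction on $\eta$. The base case $D_1(\Sigma^0_\xi) = \Sigma^0_\xi \subseteq \Delta^0_{\xi+1}$ is immediate; for successor $\eta$, a set in $D_{\eta+1}(\Sigma^0_\xi)$ is a difference of a $\Sigma^0_\xi$ set and a set in $D_\eta(\Sigma^0_\xi) \subseteq \Delta^0_{\xi+1}$, hence lies in $\Delta^0_{\xi+1}$. For a computable limit $\eta$ with witnessing sequence $\seq{A_i}_{i<\eta}$, both $A$ and its complement are countable unions of $\Delta^0_{\xi+1}$-differences $A_i \setminus \bigcup_{j<i}A_j$ (the complement also picking up the $\Pi^0_\xi$-leftover $\Baire \setminus \bigcup_{i<\eta}A_i$), so both are $\Sigma^0_{\xi+1}$.

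For the hard inclusion, fix $A \in \Delta^0_{\xi+1}$ and let $\alpha < \wock$ be the ordinal satisfying $\Sigma^0_\xi = \Sigma^0_{1+\alpha}$. Applying \ref{TSP:Sigma_alpha_sets} at level $\alpha+1$ to both $A$ and $\Baire \setminus A$, fix c.e.\ sets $U_+, U_- \subseteq \w^{<\w}$ with $A = [U_+]^\prec_{\alpha+1}$ and $\Baire \setminus A = [U_-]^\prec_{\alpha+1}$. The main device is to analyse the tree of $\alpha$-true strings: at each $\alpha$-true $\sigma$, define a current guess by reading off the sign of the longest $\tau \preceq_\alpha \sigma$ with $\tau \in U_+ \cup U_-$ and $\tau \preceq_{\alpha+1} \sigma$. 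Using \RefClub{} together with \ref{TSP:p_function}, this guess stabilises along the $(\alpha+1)$-true stages of each $x$ to the correct value of $\chi_A(x)$, but can oscillate at $\alpha$-true-but-not-$(\alpha+1)$-true stages, corresponding precisely to the points where $p_\alpha$ drops.

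I would then define a decreasing transfinite sequence $\Baire = F_0 \supseteq F_1 \supseteq \cdots$ of $\Pi^0_\xi$ sets by putting into $F_{i+1}$ those $x$ for which no $\alpha$-true-appearing $\sigma \prec x$ has yet ``committed by mind-change rank $i$'', and $F_\lambda = \bigcap_{i<\lambda}F_i$ at limits. Each $\Baire \setminus F_i$ is uniformly $\Sigma^0_\xi$ by \ref{TSP:Sigma_alpha_sets} combined with \ref{TSP:computable}: the condition ``some $\alpha$-true-appearing $\sigma \prec x$ has mind-change rank $\leq i$'' is witnessed by a c.e.\ set of finite strings indexed computably in $i$. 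Since $A \in \Delta^0_{\xi+1}$, the tree of unresolved mind-change attempts is a computable well-founded tree on $\w^{<\w}$ (an infinite path would yield an $x$ on which the guess never stabilises, contradicting the dual $\Sigma^0_{\xi+1}$-presentation of $\Baire \setminus A$ via $U_-$), and by $\Sigma^1_1$-boundedness its rank is bounded by some $\eta < \wock$. Taking $A_i := \Baire \setminus F_{i+1}$ yields the desired increasing $\Sigma^0_\xi$-sequence exhibiting $A \in D_\eta(\Sigma^0_\xi)$.

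The main obstacles are twofold: (i) verifying that the mind-change stratification actually yields uniformly $\Sigma^0_\xi$ sets, which requires careful use of \ref{TSP:p_function} to understand how $(\alpha+1)$-truth can newly fail along $\prec_\alpha$-chains while \RefClub{} prevents it from being regained; and (ii) showing well-foundedness on all of $\Baire$, which crucially uses the $\Sigma^0_{\xi+1}$-description of the complement $\Baire \setminus A$ (the description of $A$ alone would only give well-foundedness on $A$). Matching the parities of the resulting $A_i$ against the convention of \cref{def:difference_hierarchy} is then a bookkeeping step once the rank $\eta$ is in hand.
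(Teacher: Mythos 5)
Your proposal is correct and follows essentially the same route as the paper: the paper factors the argument through an ``$\alpha$-limit lemma'' (a computable $\alpha$-approximation of $1_A$ obtained from the dual applications of \ref{TSP:Sigma_alpha_sets} at level $\alpha+1$, with stabilisation via \RefClub) and a witness-function characterisation of $D_\eta(\Sigma^0_{1+\alpha})$, whereas you inline both, but the key devices --- the guess function, the well-founded mind-change tree, and the rank-based stratification into uniformly $\Sigma^0_\xi$ sets --- are identical. The only point worth tightening is your appeal to $\Sigma^1_1$-boundedness: since the lightface class $D_\eta(\Sigma^0_\xi)$ depends on the particular computable copy of $\eta$, one needs not just an abstract bound on the rank but a computable well-ordering together with a computable rank function, which the paper obtains directly by taking the Kleene--Brouwer ordering of the mind-change tree.
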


\begin{remark}
    The effective version of Hausdorff's theorem (\cref{thm:effectiveHausdorff-Kuratowski} for $\xi = 1$) was proved by Ershov for subsets of~$\Nat$ and by Selivanov~\cite{Selivanov:2003} for subsets of~$\Baire$; see also~\cite{Pauly}. We remark again, however, that the lightface class $D_\eta(\Gamma)$ heavily depends on the particular choice of computable copy of~$\eta$. The theorem says that for every $\Delta^0_{\xi+1}$ set~$A$ there is some computable well-ordering~$R$ such that $A\in D_R(\Sigma^0_\xi)$. 

    For subsets of~$\Nat$, the situation is particularly dire; Ershov showed that every~$\Delta^0_2$ subset of~$\Nat$ is in $D_R(\Sigma^0_1)$ for some computable copy~$R$ of~$\w$. The complexity of a set $A\in \Delta^0_2$ is coded into this copy of~$\w$, rather than the sequence of sets $\seq{A_i}_{i<\w}$; the copy of~$\w$ may be a ``bad copy'', in which the successor relation is not computable. 

    For subsets of Baire space, topological considerations preclude such an anomaly, as the hierarchy of classes $D_\eta(\bSigma^0_\xi)$ is proper, and this is witnessed by lightface sets. Still, the classes $D_\eta(\Sigma^0_\xi)$ are not as robust as the boldface ones. Louveau and Saint-Raymond's work in \cite{LouveauSR:Strength} implies a weakening of \cref{thm:effectiveHausdorff-Kuratowski} which is robust:
    \[
        \Delta^0_{\xi+1}(\Delta^1_1) = \bigcup_{\eta<\wock} D_\eta(\Sigma^0_\xi)(\Delta^1_1), 
    \]
    where for a class~$\Gamma$ we let $\Gamma(\Delta^1_1) = \bigcup_{z\in \Delta^1_1} \Gamma(z)$. The class $D_\eta(\Sigma^0_\xi)(\Delta^1_1)$ depends only on the ordinal~$\eta$ and not on the choice of a~$\Delta^1_1$ copy of~$\eta$; this is because any two~$\Delta^1_1$ copies of~$\eta$ are isomorphic by a~$\Delta^1_1$ isomorphism. 
\end{remark}

The standard definitions of the Borel classes $\bSigma^0_\xi$ and $\bDelta^0_\xi$, as well as \cref{def:difference_hierarchy}, are, in the parlance of computability theory, \emph{static}: sets in these classes are characterised by Boolean operations. Property \ref{TSP:Sigma_alpha_sets} of the true stages machinery allows us to view membership in a $\Sigma^0_{1+\alpha}$ set as the result of a \emph{dynamic} process: to determine whether $x\in A$, we search over the finite sequences $\s\prec_\alpha x$, and we declare ``yes'' when we find such~$\s$ enumerated into~$U$. 

For the ambiguous classes, the prototypical dynamic decision process is given by Shoenfield's ``limit lemma'' \cite{Shoenfield:LimitLemma}, which states that a function $F\colon \Nat\to \Nat$ is~$\Delta^0_2$ (equivalently, computable from the halting problem $\emptyset'$) if and only if it has a \emph{computable approximation}: a computable function $f\colon \Nat^2\to \Nat$ such that for all~$n$, $F(n)= \lim_s f(n,s)$, the limit taken with respect to the discrete topology on~$\Nat$. That is, for all~$n$, for all but finitely many~$s$, $F(n) = f(n,s)$. Each function $n \mapsto f(n,s)$ is the ``stage~$s$ guess'' of the values of~$F$. 

This can be extended to subsets of Baire space: a \emph{computable approximation} of a function $F\colon \Baire\to \Nat$ is a computable function $f\colon \w^{<\w}\to \Nat$ such that for all $x\in \Baire$, for all but finitely many $\s\prec x$, we have $F(x) = f(\s)$. That is, the sequence $f(x\rest{0})$, $f(x\rest{1}),\dots$ approximates $F(x)$ in the sense of Shoenfield. Shoenfield's limit lemma relativises uniformly, and so it shows that a function $F\colon \Baire\to \Nat$ has a computable approximation if and only if it is $\Delta^0_2$-measurable. Using true stages, we can extend this further up the Borel hierarchy.

\begin{definition} \label{def:alpha_computable_approximation}
    Let $F\colon \Baire\to \Nat$ and let $\alpha<\wock$. An \emph{$\alpha$-approximation} of~$F$ is a function $f\colon \w^{<\w}\to \Nat$ such that for all $x\in \Baire$, 
    \[
          F(x) = \lim_{\s\prec_\alpha x} f(\s)
      \]  
      (in the sense that for all but finitely many $\s\prec_\alpha x$ we have $F(x) = f(\s)$). 
\end{definition}

The $\alpha$-analogue of Shoenfield's limit lemma is the following. Recall that for a lightface class~$\Gamma$, a function $F\colon \Baire\to \Nat$ is $\Gamma$-measurable if the sets $F^{-1}\{n\}$ are in~$\Gamma$, uniformly in~$n$. 

\begin{proposition} \label{prop:alpha-limit_lemma}
    A function $F\colon \Baire\to \Nat$ is $\Delta^0_{1+\alpha+1}$-measurable if and only if it has a computable $\alpha$-approximation. 
\end{proposition}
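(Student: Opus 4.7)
My plan is to prove the two directions separately. For $(\Leftarrow)$, assume $f$ is a computable $\alpha$-approximation of $F$. Since $F(x) = n$ iff only finitely many $\s \prec_\alpha x$ satisfy $f(\s) \ne n$, we have
\[
F^{-1}\{n\} = \Baire \setminus \bigcap_{k \in \Nat} \{x : \exists \s \prec_\alpha x \text{ with } |\s| \ge k \text{ and } f(\s) \ne n\}.
\]
Each set inside the intersection is $\Sigma^0_{1+\alpha}$ by \ref{TSP:Sigma_alpha_sets} (applied to the c.e.\ set $\{\s : |\s| \ge k,\ f(\s) \ne n\}$, uniformly in $k, n$), so the intersection is $\Pi^0_{1+\alpha+1}$ and $F^{-1}\{n\}$ is $\Sigma^0_{1+\alpha+1}$. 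Since the level sets partition $\Baire$, each is also $\Pi^0_{1+\alpha+1}$, hence $\Delta^0_{1+\alpha+1}$, uniformly in $n$, so $F$ is $\Delta^0_{1+\alpha+1}$-measurable.

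For $(\Rightarrow)$, I reduce to the classical Shoenfield limit lemma via the $\alpha$-jump. Using \cref{prop:effective_Kuratowski_Sierpinski:1} uniformly on the $\Delta^0_{1+\alpha+1}$ level sets of $F$, there is a partial computable $\Phi\colon \Baire \to \Nat$ with $F(x) = \Phi(x^{(\alpha+1)})$ for every $x$. Because $x^{(\alpha+1)} = (x^{(\alpha)})'$ and $x$ is uniformly recoverable from $x^{(\alpha)}$ (trivially for $\alpha = 0$, and via $x \le_T x'$ otherwise), the auxiliary function $G(y) := \Phi(y')$ is well-defined and $\Delta^0_2$-measurable on the image of the $\alpha$-jump, with $F(x) = G(x^{(\alpha)})$; extend $G$ arbitrarily to all of $\Baire$ without harming $\Delta^0_2$-measurability. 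Shoenfield's limit lemma gives a computable $h\colon \w^{<\w} \to \Nat$ with $G(y) = \lim_{\rho \prec y} h(\rho)$. Set $f(\s) := h(\s^{(\alpha)})$; this is computable by \ref{TSP:computable}, and property $(*)$ says $\{\s^{(\alpha)} : \s \prec_\alpha x\}$ is cofinal among the initial segments of $x^{(\alpha)}$, so
\[
\lim_{\s \prec_\alpha x} f(\s) = \lim_{\rho \prec x^{(\alpha)}} h(\rho) = G(x^{(\alpha)}) = F(x),
\]
completing the construction of the $\alpha$-approximation.

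The main obstacle is the $(\Rightarrow)$ direction. A naive attempt to read off the value of $f(\s)$ from a c.e.\ $\Sigma^0_{1+\alpha+1}$ index for $F^{-1}\{n\}$ along the $\prec_{\alpha+1}$-path to $\s$ fails: a finite $\tau \preceq_{\alpha+1} \s$ landing in such an index need not actually be $\alpha+1$-true for the $x$'s with $\s \prec_\alpha x$, so its ``prediction'' of $F(x)$ can be wrong arbitrarily late. Factoring through the $\alpha$-jump and invoking Shoenfield sidesteps this pitfall by working with the genuine true path $\rho \prec x^{(\alpha)}$, of which each $\s^{(\alpha)}$ is an actual prefix.
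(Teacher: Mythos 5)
Your $(\Leftarrow)$ direction is essentially the paper's: both use \ref{TSP:Sigma_alpha_sets} to write the level sets of $F$ as Boolean combinations of uniformly $\Sigma^0_{1+\alpha}$ sets read off from $f$, and then invoke the partition trick to upgrade $\Sigma^0_{1+\alpha+1}$- to $\Delta^0_{1+\alpha+1}$-measurability; your complementation is a harmless variant of the paper's sets $A_{n,k}$.

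The $(\Rightarrow)$ direction is a genuinely different route. The paper does exactly the ``naive'' thing you dismiss: it takes uniformly c.e.\ $U_n$ with $F^{-1}\{n\}=[U_n]^\prec_{\alpha+1}$, arranges them to be computable, pairwise disjoint and upwards closed in $(\w^{<\w},\preceq_{\alpha+1})$, and sets $f(\s)=n$ for $\s\in U_n$. The failure mode you describe is neutralised by \RefClub{}: once a genuine $\s_0\prec_{\alpha+1}x$ enters $U_n$, every $\tau$ with $\s_0\preceq_\alpha\tau\prec_\alpha x$ satisfies $\s_0\preceq_{\alpha+1}\tau$, hence lies in $U_n$, so only the finitely many $\s\prec_\alpha x$ below $\s_0$ can be wrong. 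Your alternative --- factoring $F$ through the $\alpha$-jump and transporting the classical limit lemma --- is correct in outline and conceptually clean, but it trades the single listed property \RefClub{} for several unlisted features of the machinery: $x^{(\alpha+1)}$ equals $(x^{(\alpha)})'$ only up to uniform Turing equivalence (so $\Phi$ must be pre-composed with that reduction), extending $G$ off the image of the jump uses that this image is effectively closed, and the final limit transfer needs more than cofinality: you need $|\s^{(\alpha)}|\to\infty$ along the $\alpha$-true path (the monotone reading of ($*$)), since otherwise infinitely many $\s\prec_\alpha x$ could map to short, already-incorrect stages. These points are all true and standard to fix, but the paper's argument is more self-contained, stays uniform, and makes the inherently effective character of the statement (noted in the remark following it) more transparent.
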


\begin{proof}
    In one direction, let~$f$ be a computable $\alpha$-approximation of~$F$. For $\s\in \w^{<\w}$, let $|\s|_\alpha$ denote the height of~$\s$ in the tree $(\w^{<\w}, \preceq_{\alpha+1})$. For $n,k<\w$, let 
    \[
        A_{n,k}  = \bigcup \{ [\sigma]_{\alpha} \,:\, f(\s)=n \andd |\s|_\alpha = k  \}. 
    \]
    These sets are uniformly $\Sigma^0_{1+\alpha}$. For each~$k$, $\{A_{n,k}\,:\, n<\w\}$ partitions~$\Baire$, so the sets $A_{n,k}$ are in fact uniformly $\Delta^0_{1+\alpha}$, so for all~$m$, $\bigcap_{k\ge m} A_{n,k}$ is $\Pi^0_{1+\alpha}$ (again, uniformly). Now for all~$n$, $F(x)=n$ if and only if $x\in A_{n,k}$ for all but finitely many~$k$, so $F$ is $\Sigma^0_{1+\alpha+1}$-measurable, which implies that it is  $\Delta^0_{1+\alpha+1}$-measurable. 

    \medskip
    
    In the other direction, suppose that $F\colon \Baire\to \Nat$ is $\Delta^0_{1+\alpha+1}$-measurable. By \ref{TSP:Sigma_alpha_sets}, which applies uniformly, there are uniformly c.e.\ sets $U_n\subseteq \w^{<\w}$ such that for all $x\in \Baire$, $F(x)=n$ if and only if $x\in [U_n]^\prec_{\alpha+1}$. 

    We we may assume that: 
    \begin{sublemma}
        \item the sets~$U_n$ are upwards closed in $(\w^{<\w}, \preceq_{\alpha+1})$; 
        \item the sets $U_n$ are uniformly computable (rather than uniformly c.e.), and their union $\bigcup_n U_n$ is computable as well; and 
        \item The sets $U_n$ are pairwise disjoint.  
    \end{sublemma}
    This is a standard trick from computability theory, relying on the fact that the upwards closure of~$U_n$ in $(\w^{<\w},\preceq_{\alpha+1})$ generates the same $\Sigma^0_{1+\alpha+1}$ set. For each $\tau\in \w^{<\w}$, we let $s=|\tau|_{\alpha+1}$ be the height of~$\tau$ in the tree $(\w^{<\w}, \preceq_{\alpha+1})$. We then declare that~$\tau$ belongs to the modified~$U_n$ if $n<s$, some $\s\preceq_{\alpha+1}\tau$ is enumerated into~$U_n$ by stage~$s$, and this does not hold for any $m<n$.\footnote{%
    We would imagine that by their property with respect to~$F$, the sets~$U_n$ are naturally pairwise disjoint: if $\s\in U_{n}\cap U_m$ then $F(x)$ is both~$n$ and~$m$ for any $x\in [\s]^\prec_{\alpha+1}$, which appears impossible. However, this argument relies on $[\s]^\prec_{\alpha+1}$ being nonempty, which is not necessarily the case. Indeed, for any $\beta>0$, the property $[\s]^\prec_\beta= \emptyset$ is $\Pi^1_1$-complete, so we cannot computably ignore all such finite sequences~$\s$. However, if, during the computation of~$U_n$, we notice that we have $\s\in U_n\cap U_m$ for some $m<n$, this provides proof that $[\s]^\prec_{\alpha+1}=\emptyset$, in which case we are free to declare that it does not belong to~$U_n$ without harming the desired properties of the sets~$U_n$.%
    } 

    Having guaranteed properties (a)--(c) above, we define $f(\s)=n$ for all $\s\in U_n$, and $f(\s)=0$ if $\s\notin \bigcup_n U_n$. Property~(b) implies that~$f$ is computable. We show that~$f$ is an $\alpha$-approximation of~$F$. Let $x\in \Baire$; let $n = F(x)$. There is some $\s\prec_{\alpha+1} x$ in~$U_n$.  Let~$\tau$ be such that $\s\preceq_\alpha \tau \prec_\alpha x$. By \RefClub, $\s\preceq_{\alpha+1} \tau$. Since~$U_n$ is $\preceq_{\alpha+1}$-upwards closed, $\tau\in U_n$, so $f(\tau)=n$. 
\end{proof}

\begin{remark} 
    \Cref{prop:alpha-limit_lemma} is \emph{inherently effective}. In the language of effective topology, it says that a function $F\colon \Baire\to \Nat$ has a computable $\alpha$-approximation if and only if it is effectively continuous with respect to the $(\emptyset,\alpha+1)$-topology. One could imagine that this is just the effective version of the following: a function $F\colon \Baire\to \Nat$ has an $\alpha$-approximation if and only if it is continuous with respect to the $(\emptyset,\alpha+1)$-topology. The proof above gives the right-to-left implication. However the ``easier'' direction may fail: if $f$ is an $\alpha$-approximation of~$F$, then~$F$ is $\Delta^0_{1+\alpha+1}(f)$-measurable; this does not imply $(\emptyset,\alpha+1)$-continuity. 

    We note, though, that the proof of \cref{thm:effectiveHausdorff-Kuratowski} only uses the ``harder'' direction. Thus, we could use the machinery to give a direct proof of \cref{thm:Hausdorff-Kuratowski} without passing through \cref{thm:effectiveHausdorff-Kuratowski}. This is the path we take with Wadge's theorem below. 
\end{remark}

Toward a proof of \cref{thm:effectiveHausdorff-Kuratowski}, we give a dynamic description of the class $D_\eta(\Sigma^0_{1+\alpha})$. Let~$A$ be the $D_\eta(\Sigma^0_{1+\alpha})$ set defined from the increasing sequence $\seq{A_{i}}_{i<\eta}$.  After taking~$\alpha$ jumps, the sets~$A_i$ can be thought of as being ``c.e.'' (\ref{TSP:Sigma_alpha_sets}). The dynamic process we envision for deciding if $x\in A$ is the following. We start by guessing that $x\notin A$. Once we see that $x\in \bigcup_{i<\eta} A_{i}$, at each stage~$s$, we find the least $i<\eta$ for which $x\in A_i$, and we guess that $x\in A$ if and only if the parity$(i)\ne$ parity$(\eta)$. In fact, this is a particular kind of an $\alpha$-approximation. 



\begin{proposition} \label{prop:D_eta_level_and_approximations}
    Let~$\alpha$ and $\eta\ge 1$ be computable ordinals. A set $A\subseteq \Baire$ is in $D_\eta(\Sigma^0_{1+\alpha})$ if and only if its characteristic function~$1_A$ has a computable $\alpha$-approximation~$f$ for which is there is a computable ``witness'' function $o\colon \w^{<\w}\to \eta+1$ satisfying:
    \begin{orderedlist}
        \item If $\s\preceq_\alpha \tau$ then $o(\tau)\le o(\s)$; 
        \item If $\s\preceq_\alpha \tau$ and $f(\tau)\ne f(\s)$ then $o(\tau)< o(\s)$;
        \item If $o(\s)=\eta$ then $f(\s)=0$.  
    \end{orderedlist}
\end{proposition}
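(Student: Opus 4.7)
My plan is to prove both directions by direct construction, using \ref{TSP:Sigma_alpha_sets} to move between $\Sigma^0_{1+\alpha}$ sets and c.e.\ subsets of $\w^{<\w}$, and \ref{TSP:computable} to ensure that the resulting $f$ and $o$ are computable.

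For the forward direction, I would start from an increasing witnessing sequence $\seq{A_i}_{i<\eta}$ and pull back each $A_i$ to a c.e.\ $U_i\subseteq\w^{<\w}$ via \ref{TSP:Sigma_alpha_sets}, arranging (by replacing $U_i$ with $\bigcup_{j\le i}U_j$) that $i\mapsto U_i$ is also increasing. I would then let $o(\s)$ be the least $i<\eta$ such that some $\tau\preceq_\alpha\s$ has entered $U_i$ by stage $|\s|$ (and $\eta$ if no such $i$ appears), and set $f(\s)=1$ precisely when $o(\s)<\eta$ and $\textup{parity}(o(\s))\neq\textup{parity}(\eta)$. Condition~(i) will fall out of the monotonicity of $U_i$ in both $i$ and stage together with the fact that $\{\tau \,:\, \tau\preceq_\alpha\s\}$ grows when $\s$ is $\preceq_\alpha$-extended; condition~(iii) is built into the definition of $f$; and (ii) is automatic because $f$ is a function of the parity of $o$. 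To check that $f$ is an $\alpha$-approximation of $1_A$, I would run the Dekker-style argument from~($*$) to see that $o(\s)\to i^*(x):=\min\{i<\eta \,:\, x\in A_i\}$ along $\prec_\alpha x$ (or to $\eta$ if no such $i$ exists).

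For the backward direction, given $(f,o)$ satisfying (i)-(iii), I would let $b_i=1$ if $\textup{parity}(i)\neq\textup{parity}(\eta)$ and $b_i=0$ otherwise, and define
\[
    T_i=\{\s \,:\, o(\s)<i\}\cup\{\s \,:\, o(\s)=i \andd f(\s)=b_i\}.
\]
Since $o$ and $f$ are computable, $T_i$ is uniformly computable in $i$, so $A_i:=[T_i]^\prec_\alpha$ is uniformly $\Sigma^0_{1+\alpha}$ by \ref{TSP:Sigma_alpha_sets}. Monotonicity of $\seq{A_i}_{i<\eta}$ is a direct check: $T_i\subseteq T_{i+1}$ and $\bigcup_{j<i}T_j\subseteq T_i$ at limit $i$.

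The substantive step is to verify that the $D_\eta(\Sigma^0_{1+\alpha})$ set associated to $\seq{A_i}_{i<\eta}$ really is $A$. Because $o$ is weakly $\preceq_\alpha$-decreasing into the well-order $\eta+1$, it stabilizes along $\prec_\alpha x$ at some $\ell=\ell(x)\le\eta$, and then (ii) forces $f$ to stabilize at some $\phi=\phi(x)$, which must equal $1_A(x)$. I would then read off $i^*(x)=\min\{i<\eta \,:\, x\in A_i\}$ from $(\ell,\phi)$ by a short case analysis: if $\ell=\eta$ then (iii) makes $x$ lie in no $A_i$; if $\ell<\eta$ and $\phi=b_\ell$ then $i^*(x)=\ell$; if $\ell<\eta$ and $\phi\neq b_\ell$ then $i^*(x)=\ell+1$. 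In each sub-case a direct comparison of parities yields $\textup{parity}(i^*(x))\neq\textup{parity}(\eta) \Iff \phi=1 \Iff x\in A$. I expect the main point requiring care to be the parity bookkeeping in the third sub-case when $\ell$ is a limit ordinal, since then $\ell+1$ has parity $1$ regardless of $\ell$; but in each of the finitely many parity sub-cases the arithmetic checks out, giving the equality $A = \bigcup\{A_i\setminus\bigcup_{j<i}A_j \,:\, i<\eta,\ \textup{parity}(i)\neq\textup{parity}(\eta)\}$.
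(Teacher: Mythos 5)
Your proposal is correct and follows essentially the same route as the paper: the forward direction pulls the $A_i$ back to c.e.\ sets via \ref{TSP:Sigma_alpha_sets} and defines $o(\s)$ as the least index witnessed by some $\preceq_\alpha$-predecessor by stage $|\s|$, and your sets $T_i$ in the backward direction coincide exactly with the paper's $\{\s : \tilde o(\s)\le i\}$ for its parity-corrected witness $\tilde o$, just unpacked into an explicit case analysis. The only detail to add is the boundary sub-case $\ell+1=\eta$ (where $x$ lies in no $A_i$ and one must check directly that $\phi=0$), which your parity bookkeeping does handle.
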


This notion of a ``witness'' for the convergence of an approximation is widely used for computable approximations of functions $F\colon \Nat\to \Nat$ (see for example \cite[Ch.2]{BIG}). The idea is that the well-foundedness of $\eta+1$ guarantees that the sequence $\seq{f(\s)}_{\s\prec_\alpha x}$ eventually stabilises, i.e., that ${f}$ indeed $\alpha$-converges to a limit~$F$. In general, the more ``mind-changes'' the approximation has, the more complicated the function~$F$ can be. The longer~$\eta$ is, the ``more room'' we have for mind-changes.

\begin{proof}
    In one direction, let~$A$ be a $D_\eta(\Sigma^0_{1+\alpha})$ set; let $\seq{A_{i}}_{i<\eta}$ be the sequence of uniformly $\Sigma^0_{1+\alpha}$ sets used to define~$A$. For uniformity of notation, let $A_\eta = \Baire$; Then $\seq{A_i}_{i\le \eta}$ is increasing and uniformly $\Sigma^0_{1+\alpha}$, and $x\in A$ if and only if parity$(i)\ne$ parity$(\eta)$ for the least~$i$ such that $x\in A_i$. 

    By \ref{TSP:Sigma_alpha_sets}, let $U_i$ be a sequence of uniformly c.e.\ subsets of~$\w^{<\w}$ such that $A_i = [U_i]^\prec_\alpha$. As in the proof of \cref{prop:alpha-limit_lemma}, we may assume that the sets~$U_i$ are uniformly computable, and each is upwards closed in $(\w^{<\w},\preceq_\alpha)$. We may take $U_\eta = \w^{<\w}$. 

    Recall that we are working with a computable copy of~$\eta$, i.e., a computable well-ordering $<^*$ of~$\Nat$ such that $(\Nat, <^*)\cong \eta$. Let $n\mapsto i_n$ be the isomorphism. For each $\s\in \w^{<\w}$, let $O(\s) = \{\eta\}\cup  \left\{ i_n \,:\,  n<|\s| \right\}$ (more thematically, we should take $n<|\s|_\alpha$, where as above $|\s|_\alpha$ is the height of~$\s$ in the tree $(\w^{<\w},\preceq_\alpha)$). So $O(\s)$ is a finite subset of $\eta+1$, the map $\s\mapsto O(\s)$ is computable, $O(\s)\subseteq O(\tau)$ if $\s\preceq_\alpha \tau$, $\bigcup_{\s\prec_\alpha x} O(\s) = \eta+1$ for all $x\in \Baire$, and $\eta\in O(\s)$ for all~$\s$.    

    Define $f\colon \w^{<\w}\to \{0,1\}$ and $o\colon \w^{<\w}\to \eta+1$ as follows. Given $\s\in \w^{<\w}$, let~$o(\s)$ be the least $i\in O(\s)$ (in the ordering of~$\eta$) such that $\s\in U_i$; let $f(\s)=1$ if parity$(o(\s))\ne $ parity$(\eta)$, $0$ otherwise. 

    It is not difficult to check that~$o$ and~$f$ are computable, and that the conditions (i)--(iii) hold. For each $x\in \Baire$, $1_A(x) = \lim_{\s\prec_\alpha x} f(\s)$ because for almost all $\s\prec_\alpha x$ we have $o(\s) = o(x) = \min \{i<\eta\,:\, x\in A_i\}$.

    \medskip
    
    In the other direction, let $f$ be an $\alpha$-computable approximation of the characteristic function $1_A$ of~$A$, with a witnessing function~$o$ as described. We would like to let $U_i = \left\{ \s\in \w^{<\w} \,:\,  o(\s)\le i  \right\}$ and $A_i = [U]^\prec_\alpha$. This would work if $f(\s)=1$ if and only if parity$(o(\s))\ne$ parity$(\eta)$. This, however, is only guaranteed when $o(\s)=\eta$ (this is condition~(iii)). So it remains to show that we can modify~$o$ to get a witness $\tilde o\colon \w^{<\w}\to \eta+1$ satisfying (i)---(iii) and also satisfying $f(\s)=1$ if and only if parity$(\tilde o(\s))\ne$ parity$(\eta)$. 

    This is easily done by setting $\tilde o(\s)$ to be either $o(\s)$ or $o(\s)+1$, the choice determined by $f(\s)$ and the parity of the ordinals. As mentioned, (iii) for~$o$ shows that if $o(\s) = \eta$ then $\tilde o(\s)=\eta$ as well; we never need to choose the value $\eta+1$. We need to verify~(i) for $\tilde o$ (and then~(ii) follows). Suppose that $\s\preceq_\alpha \tau$. Since $o(\tau)\le o(\s)$, we would only have a problem if $o(\tau) = o(\s) = i$ but $\tilde o(\s) = i+1$ while $\tilde o(\tau) = i$. But this implies that $f(\s)\ne f(\tau)$, which is impossible since~$o$ satisfies~(ii). 
\end{proof}

\begin{remark}\label{rmk:eta-c.a.}
    Ershov showed that a function $F\colon \Nat\to \Nat$ is $D_\eta(\Sigma^0_1)$-measurable if and only if it has a computable approximation~$f$ with a $<\eta$-witness: a computable function $o\colon \Nat^2\to \eta$ such that $o(n,s)\ge o(n,t)$ when $t\ge s$ and $o(n,s)>o(n,t)$ when in addition $f(n,s)\ne f(n,t)$. Such functions are called \emph{$\eta$-computably approximable} in \cite{BIG}.  In particular, a set $A\subseteq \Nat$ is in Ershov's ambiguous class $\Delta^{-1}_\eta$ (i.e., it is both $D_\eta(\Sigma^0_1)$ and co-$D_\eta(\Sigma^0_1)$) if and only if its characteristic function is $\eta$-c.a. The idea is that for each~$n$, since we know that $f(n)$ is defined, we can wait for an approximation of one the sets $F^{-1}\{m\}$ to give us an ordinal below~$\eta$, and then start our approximation from that point. 

    We can similarly define a notion of an $<\eta$-witness for a computable $\alpha$-appproximation of a $\Delta^0_{1+\alpha+1}$-measurable function $F\colon \Baire\to \Nat$: a function~$o$ as in \cref{prop:D_eta_level_and_approximations} but which takes values below~$\eta$; the value~$\eta$ is not allowed, and so condition~(iii) is removed. 

    We would then hope that every $D_\eta(\Sigma^0_1)$-measurable function $F\colon \Baire\to \Nat$ has a computable $\alpha$-approximation with such a witness~$o$. This is almost the case; since we have to wait until we see an ordinal drop, the function~$o$ will not be defined on all of $\w^{<\w}$ but rather on a computable subset of $\w^{<\w}$ which is dense and upwards-closed in $(\w^{<\w},\prec_\alpha)$. We study such functions in greater detail in \cite{companionpaper}. 
\end{remark}

We can now prove the effective Hausdorff-Kuratowski theorem.

\begin{proof}[Proof of \cref{thm:effectiveHausdorff-Kuratowski}]
    By \cref{prop:alpha-limit_lemma,prop:D_eta_level_and_approximations}, it suffices to show: if $f\colon \w^{<\w}\to \Nat$ is a computable $\alpha$-approximation of a function $F\colon \Baire\to \Nat$, then there is some computable well-ordering~$\eta$ and some computable witness function $o\colon \w^{<\w}\to \eta$ satisfying~(i) and~(ii) of \cref{prop:D_eta_level_and_approximations}.  By applying this to $F = 1_A$, it will follow that $A$ is $D_{\eta+1}(\Sigma^0_\alpha)$ (since $o$ never takes value $\eta$, property~(iii) is irrelevant).

    Let $T\subset \w^{<\w}$ consist of the empty sequence, together with the finite sequences~$\s$ for which $f(\s)\ne f(\s^-)$, where~$\s^-$ is the immediate predecessor of~$\s$ on the tree $(\w^{<\w},\preceq_\alpha)$. As a subset of that tree, $(T,\preceq_\alpha)$ is a tree as well. It is well-founded: if $\seq{\s_i}_{i<\w}$ is an infinite path in~$T$, then by \ref{TSP:extends_prec} and \ref{TSP:unique_path}, $x= \bigcup_i \s_i$ is an element of Baire space and $\s_i\prec_\alpha x$ for all~$i$; but then $\seq{f(\s)}_{\s\prec_\alpha x}$ does not stabilise to~$F(x)$, contrary to the assumption that~$f$ is an $\alpha$-approximation of~$F$. 

    Now let~$\eta$ be the Kleene-Brouwer ordering of~$T$ (also known as the Lusin-Sierpi\'nski ordering \cite{Brouwer,Kleene:1955a,LusinSierpinski:23}, see \cite[4A.4]{Moschovakis:2ndEd}). The identity function is a computable rank function $r\colon T\to \eta$. Extend~$r$ to a function $o\colon \w^{<\w}\to \eta$ by letting $o(\tau) = r(\s)$ where~$\s$ is the longest $\preceq_\alpha$-predecessor of~$\tau$ which is on~$T$. 
\end{proof}

\subsection{Wadge's Theorem}
\label{subsec:Wadge}

Let $\lambda$ be a countable limit ordinal. Wadge described how to construct all sets in the class $\bDelta^0_\lambda$ starting from the sets in $\bDelta^0_{<\lambda}$ and closing under the operation of taking \emph{separated unions}.

\begin{definition}\label{def:separated_sequence}
A countable sequence of sets $(A_n)_{n \in \omega}$ is \emph{$\bSigma^0_\xi$-separated} if there is a sequence $(U_n)_{n \in \omega}$ of pairwise disjoint $\bSigma^0_\xi$ sets with $A_n \subseteq U_n$ for all $n$.
\end{definition}

\begin{theorem}[Wadge]\label{thm:Wadge}
Let $\lambda$ be a countable limit ordinal. The class $\bDelta^0_\lambda$ is the smallest collection of sets which:
\begin{itemize}
\item Contains $\bSigma^0_\xi$, for all $\xi < \lambda$; and
\item Is closed under unions of $\bSigma^0_\xi$-separated sequences, for all $\xi < \lambda$.
\end{itemize}
\end{theorem}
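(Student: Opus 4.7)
The plan is to prove both inclusions of $\bDelta^0_\lambda = \+W$.

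The forward inclusion $\+W\subseteq\bDelta^0_\lambda$ is routine: for $\xi<\lambda$, $\xi+1<\lambda$, so $\bSigma^0_\xi\subseteq\bDelta^0_{\xi+1}\subseteq\bDelta^0_\lambda$; and if $A=\bigsqcup_n A_n$ with $A_n\in\bDelta^0_\lambda$ and $A_n\subseteq U_n$ for pairwise disjoint $\bSigma^0_\xi$ sets $U_n$ (with $\xi<\lambda$), then $A$ is a countable union of $\bDelta^0_\lambda$ sets, hence $\bSigma^0_\lambda$, and its complement $(\Baire\setminus\bigcup_n U_n)\cup\bigsqcup_n(U_n\setminus A_n)$ is a union of a $\bPi^0_\xi$ set with countably many $\bDelta^0_\lambda$ sets, hence also $\bSigma^0_\lambda$. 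So $A\in\bDelta^0_\lambda$.

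For the reverse inclusion, I would establish the effective version and then relativise. Let $A\in\Delta^0_\lambda$; by \ref{TSP:Sigma_alpha_sets} applied to $A$ and to $\Baire\setminus A$, fix disjoint c.e.\ sets $U,V\subseteq\w^{<\w}$ with $A=[U]^\prec_\lambda$ and $\Baire\setminus A=[V]^\prec_\lambda$. Fix a computable strictly increasing sequence $(\alpha_n)_{n<\w}$ cofinal in $\lambda$. By \ref{TSP:continuous}, $\s\prec_\lambda x$ iff $\s\prec_{\alpha_n}x$ for every $n$, and every $x\in\Baire$ admits some $\s\in U\cup V$ with $\s\prec_\lambda x$.

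The main idea is a recursive $(n,\s)$-indexed decomposition. For each $n<\w$ and $\s\in U\cup V$, I would write $[\s]^\prec_{\alpha_n}\cap A$ as a $\bSigma^0_{1+\alpha_{n+1}}$-separated union of a ``continuation'' piece $[\s]^\prec_{\alpha_{n+1}}\cap A$ (to be further decomposed at depth $n+1$ under the same label $\s$) together with ``proper child'' pieces $[\tau]^\prec_{\alpha_{n+1}}\cap A$, where $\tau$ ranges over the $\preceq_{\alpha_{n+1}}$-minimal antichain of $\{\tau\in U\cup V : \s\prec_{\alpha_n}\tau \andd \s\not\preceq_{\alpha_{n+1}}\tau\}$. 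Disjointness of $[\s]^\prec_{\alpha_{n+1}}$ from each $[\tau]^\prec_{\alpha_{n+1}}$ comes from $\s$ and $\tau$ being forced $\preceq_{\alpha_{n+1}}$-incomparable (by construction and length), while disjointness among the $[\tau]$'s follows from the antichain condition via \ref{TSP:tree}; the covering $[\s]^\prec_{\alpha_n}=[\s]^\prec_{\alpha_{n+1}}\sqcup\bigsqcup_\tau[\tau]^\prec_{\alpha_{n+1}}$ follows by case analysis on whether $\s\prec_{\alpha_{n+1}}x$, in the negative case locating the required minimal $\tau$ via any $\lambda$-true witness to $x\in A\cup(\Baire\setminus A)$.

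The main obstacle is arranging well-foundedness of this recursion. Nominally the continuation chain at a fixed $\s$ runs forever, with intersection $[\s]^\prec_\lambda\cap A$ being either $[\s]^\prec_\lambda$ or $\emptyset$, and $[\s]^\prec_\lambda$ is $\bDelta^0_\lambda$ but not \emph{a priori} in $\+W$ at any $\bSigma^0_{<\lambda}$ level. The resolution is to show that the tree of proper-child descents is well-founded: combining \ref{TSP:continuous} with \ref{TSP:tree} and \ref{TSP:unique_path}, any infinite such branch would assemble an $x\in\Baire$ whose $\preceq_{\alpha_n}$-true initial segments in $U\cup V$ never stabilise to a $\lambda$-true one, contradicting $x\in A\cup(\Baire\setminus A)$. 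Transfinite induction on the rank of this tree then places $A$ in $\+W$, with continuation pieces successively absorbed into separated unions at higher depths; relativising to an arbitrary oracle yields the full theorem.
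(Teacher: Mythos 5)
Your forward inclusion is fine, and your general strategy for the converse (decompose $A$ along the true-stage tree, using the cylinders $[\tau]^\prec_{\alpha}$ as the separating sets) is the right one. But there is a genuine gap, and you have in fact put your finger on it yourself without resolving it: the ``continuation'' residue. Unfolding your recursion at a fixed $\s$ gives
\[
[\s]^\prec_{\alpha_n}\cap A \;=\; \big([\s]^\prec_\lambda\cap A\big)\,\sqcup\,\bigsqcup(\text{proper-child pieces}),
\]
and the piece $[\s]^\prec_\lambda\cap A$ (which equals $[\s]^\prec_\lambda$ or $\emptyset$) never disappears: it is a countable \emph{decreasing intersection} $\bigcap_n[\s]^\prec_{\alpha_n}$, and the class $\+W$ is closed under separated \emph{unions}, not intersections. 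Your proposed fix --- well-foundedness of the tree of proper-child descents --- controls the branching in the $\tau$-direction but says nothing about the infinite continuation chain $(n,\s),(n+1,\s),(n+2,\s),\dots$ at a single $\s$; ``successively absorbed into separated unions at higher depths'' is exactly the step that does not go through, since placing $[\s]^\prec_\lambda$ in $\+W$ is a problem of the same type as placing $A$ in $\+W$. (A priori, a decreasing intersection of $\bDelta^0_{<\lambda}$ sets is just a $\bPi^0_\lambda$ set.)

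What is missing is precisely the extra property of the machinery that the paper introduces for this theorem, namely \ref{TSP:limit}: there is a computable cofinal sequence $\seq{\lambda_k}$ such that, with $k=|\s|_\lambda$, the relation $\s\preceq_\lambda\tau$ coincides with $\s\preceq_{\lambda_k}\tau$ for \emph{all} $\tau\in\w^{\le\w}$. Hence $[\s]^\prec_\lambda=[\s]^\prec_{\lambda_k}$ is already a $\bSigma^0_{1+\lambda_k}$ set with $\lambda_k<\lambda$ --- the continuation chain stabilises at a finite stage depending only on $|\s|_\lambda$, and the cylinders $[\s]^\prec_\lambda$ themselves serve as the separating sets. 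With this in hand the paper's argument is simpler than yours: take $W_0,W_1$ disjoint and $\preceq_\lambda$-upwards closed with $A=[W_1]^\prec_\lambda$, $A^\complement=[W_0]^\prec_\lambda$, observe that $T=\w^{<\w}\setminus(W_0\cup W_1)$ is $\preceq_\lambda$-well-founded (by \ref{TSP:extends_prec} and \ref{TSP:unique_path}), and induct on a rank function on $T$, showing $A\cap[\s]^\prec_\lambda\in\+W$ by writing it as the union of $A\cap[\tau]^\prec_\lambda$ over the immediate $\prec_\lambda$-successors $\tau$ of $\s$, separated by the pairwise disjoint $\bSigma^0_{1+\lambda_k}$ sets $[\tau]^\prec_\lambda$. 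I would also note that your well-foundedness argument as stated (``never stabilise to a $\lambda$-true one'') is not yet a proof --- the clean version is the one just described, using that $W_0\cup W_1$ is upwards closed and meets the $\preceq_\lambda$-path of every $x$.
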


\begin{remark} 
    Wadge stated his theorem in terms of \emph{partitioned unions}, in which the pairwise disjoint $\bSigma^0_\xi$ sets of \cref{def:separated_sequence} are required to form a partition of~$\Baire$. Every $\bSigma^0_\xi$-separated union is of course a $\bSigma^0_{\xi+1}$-partitioned union, so for Wadge's theorem we may use either notion. The difference between seperated and partitioned unions is important in the analysis of all Borel Wadge classes. 
\end{remark}

For our proof of \cref{thm:Wadge}, we require one last property of the true stages machinery. As observed above, this property is a special feature of the development of the machinery in \cite{GreenbergTuretsky:Pi11}. As with \ref{TSP:continuous}, it reflects the fact that for limit~$\lambda$, $x^{(\lambda)}\equiv_\Tur \bigoplus_{\alpha<\lambda}x^{(\alpha)}$. Recall the notation introduced in  the proof of \cref{prop:alpha-limit_lemma}: $|\s|_\alpha$ is the height of~$\s$ in the tree $(\w^{<\w},\preceq_\alpha)$. 

\begin{TSPlist}[start=9]
    \item \label{TSP:limit}
    Let $\lambda < \wock$ be computable and limit. There is a computable and increasing sequence $\seq{\lambda_k}$, cofinal in~$\lambda$, such that for all $\s\in \w^{<\w}$, letting $k = |\s|_\lambda$, then $|\s|_{\lambda_k} = k$, and for all $\tau\in \w^{\le \w}$, 
    \[
        \s\preceq_\lambda \tau \,\Iff\,  \s\preceq_{\lambda_k} \tau. 
    \]
\end{TSPlist}

\begin{proof}[Proof of \cref{thm:Wadge}]
Let $\+C$ be the smallest collection of sets with the above closure properties. An induction shows that $\+C\subseteq \bDelta^0_\lambda$, so we will show the converse: given an $A \in \bDelta^0_\lambda$, we will demonstrate that $A \in \+C$. By working relative to an appropriate oracle, we may assume that $\lambda < \wock$ and $A \in \Delta^0_\lambda$.

Since $\lambda = 1+\lambda$, \ref{TSP:Sigma_alpha_sets} gives us two c.e.\ sets $W_0$ and~$W_1$ such that $A = [W_1]^\prec_\lambda$ and $A^\complement = \Baire\setminus A = [W_0]^\prec_\lambda$. As in previous proofs, we may assume that~$W_0$ and~$W_1$ are disjoint and upwards closed in $(\w^{<\w},\preceq_\lambda)$. 

Let $T = \w^{<\w}\setminus (W_0 \cup W_1)$.  Then $(T,\preceq_\lambda)$  is a subtree of $(\w^{<\w},\preceq_\lambda)$.  Further, since $[W_0]^\prec_\lambda \cup [W_1]^\prec_\lambda = \Baire$, $T$ is well-founded (as in the proof of \cref{thm:effectiveHausdorff-Kuratowski}, this uses \ref{TSP:extends_prec} and \ref{TSP:unique_path}). Let $r\colon T\to \w_1$ be a rank function for~$T$. 

We modify~$r$ to obtain a function $s\colon \w^{<\w}\to \w_1$ as follows:
\[
s(\sigma) =
\begin{cases*}
    r(\sigma)+1, & if $\sigma \in T$; \\
    0, & if $\sigma \not \in T$.
\end{cases*}
\]

For each $\s\in \w^{<\w}$ let $A_\s = A\cap [\s]^\prec_\lambda$. By induction on $s(\s)$, we will show that $A_\s\in \+C$. The result will then follow since $A = A_{\emptystring}$. Let $\seq{\lambda_k}$ be the sequence given by \ref{TSP:limit}. 

Suppose that $s(\s)=0$. If $\s\in W_0$ then $A_\s = \emptyset$; if $\s\in W_1$ then $A_\s = [\s]^\prec_\lambda$. In either case, by \ref{TSP:limit} and \ref{TSP:Sigma_alpha_sets}, $A_\s$ is $\bSigma^0_{1+\lambda_k}$ where $k = |\s|_\lambda$, and so is in $\+C$. 

Suppose that $s(\s) > 0$, and thus $\s\in T$. Let~$R$ be the collection of immediate $\prec_\lambda$-extensions of~$\sigma$.  By the inductive hypothesis, $A_\tau\in \+C$ for every $\tau \in R$. Let $k = |\s|_\lambda+1$, so $k = |\tau|_\lambda$ for all $\tau\in R$. By \ref{TSP:limit} (and \ref{TSP:Sigma_alpha_sets}), the sequence $\smallseq{[\tau]^\prec_\lambda}_{\tau \in R}$ shows that $\smallseq{A_\tau}_{\tau\in R}$ is $\bSigma^0_{1+\lambda_k}$-separated. Hence $A_\s = \bigcup_{\tau\in R} A_\tau$ is in~$\+C$ as well. 
\end{proof}

Note that we did not actually need the sets~$W_0$ and~$W_1$ to be c.e. In other words, it was enough that~$A$ is $(z,\lambda)$-clopen for some parameter~$z$; we didn't use the fact that it is effectively $(z,\lambda)$-clopen, i.e., $\Delta^0_\lambda(z)$. Nevertheless, the proof can be made entirely effective, and an effective version of Wadge's theorem is true. 

To state it, for a lightface class~$\Gamma$ and a computable ordinal~$\xi$, let $\SU{\xi}{\Gamma}$ be the collection of unions of all sequences of sets which are uniformly in~$\Gamma$ and uniformly $\Sigma^0_\xi$-separated. For a computable limit ordinal $\lambda$ define by induction on $\beta<\wock$ the lightface classes $\ISU{\beta}{\lambda}$ (iterated separated unions):
\begin{itemize}
    \item $\ISU{<0}{\lambda} = \Sigma^0_{<\lambda} = \bigcup_{\xi<\lambda}\Sigma^0_\xi$; 
    \item For $\beta>0$, $\ISU{<\beta}{\lambda} = \bigcup_{\gamma<\beta} \ISU{\gamma}{\lambda}$; 
    \item $\ISU{\beta}{\lambda} = \SU{<\lambda}{\ISU{<\beta}{\lambda}} = \bigcup_{\xi<\lambda}\SU{\xi}{\ISU{<\beta}{\lambda}}$.
\end{itemize}
Each class $\ISU{\beta}{\lambda}$ depends on a choice of computable copy of~$\beta$; given such a copy, we can give an effective enumeration of the class $\ISU{<\beta}{\lambda}$, so we can indeed speak of a sequence of sets being uniformly in this class. The effective version of Wadge's theorem is:

\begin{theorem} \label{thm:effective_Wadge}
    Let $\lambda <\wock$ be a limit ordinal. Then
    \[
        \Delta^0_\lambda = \ISU{<\wock}{\lambda}. 
    \]
\end{theorem}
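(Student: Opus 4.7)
The plan is to mirror the proof of \cref{thm:Wadge}, but to track everything effectively.

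For the easy direction $\ISU{<\wock}{\lambda}\subseteq \Delta^0_\lambda$, I proceed by induction on $\beta<\wock$, using a computable copy of $\beta$. The base case $\Sigma^0_{<\lambda}\subseteq \Delta^0_\lambda$ holds because $\lambda$ is a limit. For the inductive step, a $\Sigma^0_\xi$-separated union $A=\bigcup_n A_n$ of uniformly $\Delta^0_\lambda$ sets (with separators $U_n\in\Sigma^0_\xi$, $\xi<\lambda$) is itself in $\Delta^0_\lambda$: it is $\Sigma^0_\lambda$ as a countable union of $\Sigma^0_\lambda$ sets, and $A^\complement = \bigcap_n A_n^\complement \cup \bigl(\bigcup_n U_n\bigr)^\complement$-style bookkeeping shows it is $\Pi^0_\lambda$ (both $\Sigma^0_\lambda$ and $\Pi^0_\lambda$ being closed under countable unions/intersections since $\lambda$ is a limit). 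The reductions are uniform in the indices.

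For the hard direction $\Delta^0_\lambda\subseteq \ISU{<\wock}{\lambda}$, I run the proof of \cref{thm:Wadge} while paying attention to effectivity. Given $A\in\Delta^0_\lambda$, \ref{TSP:Sigma_alpha_sets} (with $1+\lambda=\lambda$) produces c.e.\ sets $W_0,W_1\subseteq\w^{<\w}$, which I may assume disjoint and $\preceq_\lambda$-upward-closed, with $A=[W_1]^\prec_\lambda$ and $A^\complement=[W_0]^\prec_\lambda$. Then $T=\w^{<\w}\setminus(W_0\cup W_1)$ is a $\Pi^0_1$ subtree of $(\w^{<\w},\preceq_\lambda)$, and is well-founded by \ref{TSP:extends_prec} and \ref{TSP:unique_path}. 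The key new ingredient is that $T$, being a $\Sigma^1_1$ well-founded tree, has rank bounded by some computable ordinal $\eta<\wock$; moreover, via a Kleene--Brouwer style linearization (applied to the computable tree $(\w^{<\w},\preceq_\lambda)$, cut off by the c.e.\ set $W_0\cup W_1$) I extract a computable function $r\colon\w^{<\w}\to\eta+1$ with $r(\tau)<r(\s)$ whenever $\s\prec_\lambda\tau$ with $\s\in T$, and $r(\s)=0$ when $\s\notin T$.

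Then I prove by induction on $r(\s)$ that $A_\s:=A\cap[\s]^\prec_\lambda$ lies in $\ISU{r(\s)}{\lambda}$, uniformly in $\s$. If $\s\notin T$, then $A_\s$ is $\emptyset$ or $[\s]^\prec_\lambda$, which by \ref{TSP:limit} and \ref{TSP:Sigma_alpha_sets} is $\Sigma^0_{1+\lambda_k}$ for $k=|\s|_\lambda$, hence in $\ISU{0}{\lambda}$ via the trivial separated union. If $\s\in T$, let $R$ be its immediate $\preceq_\lambda$-successors and $k=|\s|_\lambda+1$; by \ref{TSP:limit}+\ref{TSP:Sigma_alpha_sets}, the family $\smallseq{[\tau]^\prec_\lambda}_{\tau\in R}$ is uniformly $\Sigma^0_{1+\lambda_k}$, hence exhibits $\smallseq{A_\tau}_{\tau\in R}$ as a uniformly $\Sigma^0_{1+\lambda_k}$-separated family of sets uniformly in $\ISU{<r(\s)}{\lambda}$. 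Thus $A_\s=\bigcup_\tau A_\tau\in \SU{1+\lambda_k}{\ISU{<r(\s)}{\lambda}}\subseteq \ISU{r(\s)}{\lambda}$ uniformly. Applying this to $\s=\emptystring$ gives $A\in\ISU{r(\emptystring)}{\lambda}\subseteq \ISU{<\wock}{\lambda}$.

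The main obstacle I expect is the construction of the computable rank function~$r$. In the boldface proof the rank of $T$ is just an arbitrary countable ordinal, but in the effective setting we must (i) argue $\mathrm{rank}(T)<\wock$ via $\Sigma^1_1$-bounding applied to the $\Pi^0_1$ tree~$T$, and (ii) produce a \emph{computable} (not merely $\Delta^1_1$) witness to this, so that the inductive construction of the $\ISU{\cdot}{\lambda}$-presentation of $A$ proceeds uniformly along a fixed computable copy of~$\eta$. Both are standard once one adapts the Kleene--Brouwer ordering to the $\preceq_\lambda$-tree structure, but setting this up carefully is where the effective content lives.
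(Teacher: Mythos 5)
Your proposal is correct and follows essentially the same route as the paper: effectivise the proof of Theorem~3.8 by replacing the abstract rank function with a computable Kleene--Brouwer rank and then running an effective transfinite recursion from the leaves of~$T$ to the root to show $A_\s\in\ISU{r(\s)}{\lambda}$ uniformly. The one obstacle you flag (that $T$ is a priori only $\Pi^0_1$) is already absorbed in the paper by the standard delay trick that makes $W_0,W_1$ computable, and your cut-off/KB workaround amounts to the same thing.
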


The proof above effectivises; as in the proof of \cref{thm:effectiveHausdorff-Kuratowski}, we can make~$r$ a computable ranking function into a computable ordinal (by taking the Kleene-Brouwer ordering on~$T$). Then, by effective transfinite recursion on~$T$ (from the leaves to the root), we show that $A_\s\in \ISU{s(\s)}{\lambda}$, uniformly so.

\section{Louveau and Saint-Raymond's separation theorem}
\label{sec:Louveau-Saint-Raymond}

We begin by recalling Louveau and Saint-Raymond's separation theorem.

\begin{theorem}[Louveau \& Saint-Raymond \cite{LouveauSR:WH}]\label{thm:LSR:boldface}
Suppose that $A \in \bSigma^0_{1+\xi}$ and that~$B_0$ and~$B_1$ are disjoint~$\bSigma^1_1$ sets.  Then at least one of the following holds:
\begin{itemize}
\item There is a continuous function $f$ such that $A = f^{-1}(B_1)$ and $A^\complement = f^{-1}(B_0)$.
\item There is a $\*\Sigma^0_{1+\xi}$ set $U$ with $B_0 \subseteq U$ and $B_1 \cap U = \emptyset$.
\end{itemize}
\end{theorem}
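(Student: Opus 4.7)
The plan is to prove the effective version (so $\xi<\wock$, $A\in\Sigma^0_{1+\xi}$, and $B_0,B_1\in\Sigma^1_1$) and derive the boldface statement by relativization to a sufficient oracle. By \ref{TSP:Sigma_alpha_sets} I fix a c.e.\ set $W\subseteq\w^{<\w}$, upward-closed in $\preceq_\xi$, with $A=[W]^\prec_\xi$, and by the Kleene normal form I fix computable trees $T_0,T_1$ on $\w\times\w$ with $B_i$ equal to the projection of $[T_i]$ onto the first coordinate. The goal is to build a computable, $\preceq$-monotone function $F\colon\w^{<\w}\to(\w\times\w)^{<\w}$ inducing a continuous map $\varphi=(f,h)\colon\Baire\to\Baire^2$, so arranged that $\varphi(x)\in [T_1]$ for $x\in A$ and $\varphi(x)\in [T_0]$ for $x\in A^\complement$; then $f$ is the desired reduction. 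If the construction fails, the point of failure will provide the $\Sigma^0_{1+\xi}$ separator.

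The construction is a priority argument organized along the tree $(\w^{<\w},\preceq_\xi)$. To each finite $\sigma$ I assign a strategy whose current mode at stage $|\sigma|$ is \emph{positive} (attempting to stay inside $T_1$ above $F(\sigma)$) if some $\sigma'\preceq_\xi\sigma$ is already in $W_{|\sigma|}$, and \emph{negative} (attempting to stay inside $T_0$) otherwise. The mode of a given $\sigma$ can flip from negative to positive when a new $\sigma'\preceq_\xi\sigma$ is enumerated into $W$; I only extend $F(\sigma)$ when the extension is consistent with both the $T_0$- and $T_1$-searches currently in effect, so that a future flip never invalidates the value of $F$ already produced. Distributing these commitments across the many $\preceq_\xi$-children of $\sigma$ and appealing to \RefClub{} (property \ref{TSP:Club})---which ensures that when a flip happens the work done at intermediate $\preceq_\xi$-positions is already coherent with the new $T_1$-commitment---provides the bookkeeping that keeps the construction running so long as extensions exist. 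The resulting $F$ is automatically monotone in the standard~$\preceq$, so it induces a continuous $f$.

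The dichotomy is produced as follows: if no $\sigma$ ever becomes \emph{stuck} (meaning the active side cannot extend $F(\sigma)$ while preserving compatibility with the opposite search), the construction succeeds and we obtain $f$. Otherwise, I would collect the c.e.\ set $U_0$ of stuck $\sigma$'s and set $U=[U_0]^\prec_\xi$, which is $\Sigma^0_{1+\xi}$ by \ref{TSP:Sigma_alpha_sets}. Verification that $B_0\subseteq U$ and $B_1\cap U=\emptyset$ is a tree-chase argument using the Souslin representations: for $y\in B_0$ the canonical branch through $T_0$ above the relevant $F(\sigma)$ is always available, so only the $T_1$-side can fail along $y$'s trajectory and the corresponding $\sigma$ enters $U_0$; a symmetric argument rules out $y\in B_1$.

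The main obstacle I expect is managing the mode flips---especially at limit levels, where \ref{TSP:limit} and \ref{TSP:continuous} are needed to align the searches across cofinal sequences of $\preceq_{\xi_k}$---and showing that ``stuckness'' really is detectable in a $\Sigma^0_{1+\xi}$ fashion, i.e., that failure of a $T_i$-extension is witnessed at some finite stage and at a $\preceq_\xi$-observable position. Here the combined use of \ref{TSP:Sigma_alpha_sets}, \ref{TSP:limit}, and \RefClub{} is exactly what converts what would classically be a Gandy--Harrington forcing argument into an explicit priority construction; this is also what makes the argument formalizable in $\ATR_0$ rather than requiring $\Pi^1_1$-comprehension.
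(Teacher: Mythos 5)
Your overall strategy departs from the paper's: the paper proves the effective version by defining an explicit open/closed two-player game (hence determined in a weak base theory), obtaining the continuous reduction from a winning strategy for player~II and the separator from a winning strategy for player~I. You instead attempt a direct priority construction of the reduction, with the separator harvested from its failure. Unfortunately the proposal has two genuine gaps. First, the core invariant you impose --- extending $F(\s)$ only when ``the extension is consistent with both the $T_0$- and $T_1$-searches currently in effect'' --- is unworkable. If a single pair $(\b y,\b z)$ is kept in $T_0\cap T_1$ at every finite stage, the limit $(y,z)$ lies in $[T_0]\cap[T_1]$, forcing $y\in B_0\cap B_1=\emptyset$. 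So the two searches must carry \emph{separate} witnesses, and then the real difficulty is exactly what happens at a mode flip: the $z$-bits already committed as a $T_0$-witness cannot be reused for $T_1$. The paper's game handles this with the index set $F$: the witness bits $z_i$ are played at every turn, but only those played at positions that appear $\xi$-true \emph{and share the current opinion about membership in $W$} are assembled into $\b z$, so a flip silently discards the incompatible bits. Your proposal has no analogue of this device, and \RefClub{} alone does not supply one --- it controls coherence of the $\preceq_{\xi}$-approximations, not compatibility of witnesses for two disjoint analytic sets.

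Second, and more fundamentally, your separator lives in the wrong space. The stuck nodes $\s$ are initial segments of the $x$'s on which $f$ is being defined, so $U=[U_0]^{\prec}_{\xi}$ is a $\Sigma^0_{1+\xi}$ subset of the \emph{domain} of $f$, whereas the theorem requires a $\bSigma^0_{1+\xi}$ set separating $B_0$ from $B_1$ in the \emph{target} space. Transporting the finite-level $\preceq_\xi$-structure from the $x$-side to the $y$-side is the technical heart of the paper's argument: the winning strategy $S$ for player~I is used to pull back the true-stage relations onto the strings $\s$ played against a given $y$ (the relations $\tleq^y_\alpha$), the notion of ``strongly $\xi$-correct for $y$'' is defined from these, its complexity is computed level by level, and only then can one write $U=\{y:\exists\s\text{ strongly $\xi$-correct for $y$ with }S(y,\s)\in W\}$ and verify it is $\Sigma^0_{1+\xi}(S)$ with $S$ hyperarithmetic. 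Without a strategy (or some substitute object defined uniformly in $y$), your ``tree-chase'' verification that $B_0\subseteq U$ and $B_1\cap U=\emptyset$ has nothing to chase: an arbitrary $y\in B_0$ is not associated to any particular run of your construction. These are not bookkeeping issues but missing ideas; the game formulation is what simultaneously supplies the dichotomy (via closed determinacy) and the object $S$ from which the separator's complexity bound is extracted.
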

In \cite{LouveauSR:Strength}, Louveau and Saint-Raymond extend this theorem by considering all  non-self-dual Borel Wadge class. Further, in that paper, the authors note that their proof is sufficiently effective to yield the following:

\begin{theorem}\label{thm:LSR}
Fix $\xi < \wock$.  Suppose that $A \in \Sigma^0_{1+\xi}$ and that~$B_0$ and~$B_1$ are disjoint~$\Sigma^1_1$ sets.  Then at least one of the following holds:
\begin{itemize}
\item There is a continuous function~$f$ such that $A = f^{-1}(B_1)$ and $A^\complement = f^{-1}(B_0)$.
\item There is a $\Sigma^0_{1+\xi}(\Delta^1_1)$ set~$U$ with $B_0 \subseteq U$ and $B_1 \cap U = \emptyset$.
\end{itemize}
\end{theorem}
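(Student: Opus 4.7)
The plan is to prove the contrapositive: assume no $\Sigma^0_{1+\xi}(\Delta^1_1)$ set separates $B_0$ from $B_1$, and use this hypothesis to drive a true-stages priority construction of a continuous reduction $f$ witnessing the first alternative. By \ref{TSP:Sigma_alpha_sets} fix a c.e.\ set $W\subseteq \w^{<\w}$, upward-closed in $\preceq_\xi$, with $A=[W]^\prec_\xi$; fix recursive trees $T_0,T_1$ on $\w\times\w$ with $B_i = p[T_i]$. For each $\sigma$ let $g(\sigma)\in\{0,1\}$ be $1$ exactly when $\sigma\in W$; then $g$ is monotone along $\preceq_\xi$ (it can only flip from $0$ to $1$), and for every $x\in\Baire$ the limit $\lim_{\sigma\prec_\xi x}g(\sigma)$ equals $1_A(x)$.

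The construction defines, for each $\sigma\in\w^{<\w}$, a value $f(\sigma)\in \w^{<\w}$ with $f(\sigma)\preceq f(\tau)$ whenever $\sigma\preceq\tau$, so that the induced $f\colon\Baire\to\Baire$, $f(x)=\bigcup_{\sigma\prec x} f(\sigma)$, is continuous. Alongside $f$ we maintain a \emph{commitment} $\pi(\sigma)\in T_{g(\sigma)}$ of length $|f(\sigma)|$ whose first projection is $f(\sigma)$, witnessing that $f(\sigma)$ is being aimed into $B_{g(\sigma)}$. The key coherence requirement is $\pi(\sigma)\preceq\pi(\tau)$ when $\sigma\preceq_\xi\tau$ and $g(\sigma)=g(\tau)$; when $g$ flips between $\sigma\prec_\xi \tau$ we must restart $\pi$ into $T_1$, extending $f$ monotonically along $\preceq$ until a suitable $\pi(\tau)\in T_1$ with the correct first projection is found. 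Property \RefClub{} is what verifies coherence across multiple levels of guessing: if $\sigma_0\preceq_\xi\sigma_1\preceq_\xi\sigma_2$ and $\sigma_0\preceq_{\xi+1}\sigma_2$ then $\sigma_0\preceq_{\xi+1}\sigma_1$, so level-$(\xi+1)$ commitments are not retroactively invalidated by the intermediate $\sigma_1$.

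The main obstacle is the stall analysis. At each $\sigma$ the feasibility of extending $\pi(\sigma)$ coherently reduces to the \emph{ill-foundedness} of a tree $S_\sigma$, built uniformly from $T_{g(\sigma)}$ and the accumulated commitments, which enumerates ways to continue aiming into $B_{g(\sigma)}$ consistently with $f(\sigma)$. If the construction stalls at $\sigma$ then $S_\sigma$ is well-founded; letting $z$ be a $\Delta^1_1$ parameter encoding the ranks of these well-founded obstructions, the $z$-relativized form of \ref{TSP:Sigma_alpha_sets} packages the set of stall points into a $\Sigma^0_{1+\xi}(z)$ set $U$, and the verification of the construction shows that $U$ separates $B_0$ from $B_1$. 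Our hypothesis rules out any such $U$, so no stall ever occurs; $f$ is then defined on all of $\w^{<\w}$ and continuous, and for each $x\in\Baire$ the union $\pi(x)=\bigcup_{\sigma\prec_\xi x}\pi(\sigma)$ eventually lives in $T_{1_A(x)}$ by monotonicity of $g$, witnessing $f(x)\in B_{1_A(x)}$ and completing the proof.
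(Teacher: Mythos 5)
Your proposal diverges substantially from the paper's argument, and the divergence is where it breaks. The paper obtains the dichotomy from the determinacy of a single open/closed game: player I builds $x$, player II builds $y$ together with witnesses $z$, and II is only charged with extending the witness at stages that appear $\xi$-true and agree with the current opinion $\b{x}\in W$ versus $\b{x}\notin W$. A winning strategy for II yields the continuous reduction (this half of your construction --- $f(\sigma)$ monotone along $\preceq$, commitments $\pi(\sigma)\in T_{g(\sigma)}$ coherent along $\preceq_\xi$, restarting when $g$ flips --- is essentially the same idea and is plausible). A winning strategy $S$ for I, which can be taken hyperarithmetic because the game is open for I, is then used to define the separator $U=\{y: \exists\sigma$ strongly $\xi$-correct for $y$ with $S(y,\sigma)\in W\}$ on the \emph{target} space, with the complexity bound coming from an analysis of the ``strongly $\alpha$-correct'' relations. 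Your proposal replaces this determinacy argument with a ``stall analysis'' of a direct priority construction, and that is the step that does not work as stated.

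Concretely, three things are missing. First, a category error: the separator $U$ must be a subset of the space where $B_0$ and $B_1$ live (the $y$-side), but your stall points are strings $\sigma$ indexing the source space (the $x$-side on which $f$ is being built); there is no map from stall data to a set containing $B_0$ and disjoint from $B_1$, and no verification is even sketched. The paper's game is precisely the device that ties the two spaces together, so that I's strategy converts $y$-side data into $x$-side guesses and the separator can be read off on the $y$-side. Second, the condition ``$S_\sigma$ is well-founded'' is $\Pi^1_1$, and the assertion that the ranks of all these obstructions can be uniformly encoded into a single $\Delta^1_1$ parameter $z$ is exactly the boundedness claim that constitutes the crux of Louveau-type theorems; it requires an argument ($\Sigma^1_1$-boundedness applied to a set one must first show is $\Sigma^1_1$, or, as in the paper, the basis theorem for open games giving a hyperarithmetic strategy), not a parenthetical. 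Third, ``stalls'' versus ``never stalls'' is not shown to be an exhaustive dichotomy aligned with the two alternatives of the theorem: without the determinacy framework there is no reason that failure of your particular construction yields a separator rather than merely a failed construction. To repair the proof you would need to reintroduce the game (or an equivalent two-sided device) so that the failure case produces an object on the $B$-side of controlled complexity.
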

(Recall the notation $\Gamma(\Delta^1_1)$ from~\Cref{sub:the_hausdorff_kuratowski_theorem}.)

As usual, \cref{thm:LSR} implies \cref{thm:LSR:boldface} by relativisation. By taking~$A$ to be a $\Sigma^0_{1+\xi}$-complete set (which is therefore not $\bPi^0_{1+\xi}$), \cref{thm:LSR} implies Louveau's celebrated separation theorem:

\begin{theorem}[\cite{Louveau:80:Separation}] \label{thm:Louveau_separation}
  Let  $\xi<\wock$. If $B_0$ and~$B_1$ are disjoint $\Sigma^1_1$ sets and are separated by some~$\bSigma^0_{1+\xi}$ set, then they have a $\Sigma^0_{1+\xi}(\Delta^1_1)$ separator. 
\end{theorem}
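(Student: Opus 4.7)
The plan is to follow the construction-or-obstruction template, using true stages in place of the Gandy-Harrington topology. After absorbing parameters into $\Delta^1_1$ oracles and working relativised, use \ref{TSP:Sigma_alpha_sets} to fix a c.e.\ $W \subseteq \w^{<\w}$ with $A = [W]^\prec_\xi$, and write $B_i = p[T_i]$ for recursive trees $T_0, T_1$ on $\w \times \w$. For each $\sigma \in \w^{<\w}$, let $\mathrm{side}(\sigma) = 1$ if some $\tau \preceq_\xi \sigma$ lies in $W$, and $\mathrm{side}(\sigma) = 0$ otherwise. By \RefClub{}, $\mathrm{side}(\cdot)$ is nondecreasing along any $\prec_\xi$-chain, so for each $x\in \Baire$, $\mathrm{side}(\sigma) = 1_A(x)$ for all sufficiently long $\xi$-true initial segments $\sigma \prec_\xi x$.

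Attempt to build a continuous $f\colon \Baire\to\Baire$ by recursion on $(\w^{<\w},\preceq)$: at each $\sigma$ with immediate $\preceq$-predecessor $\sigma^-$, choose $f(\sigma) \succeq f(\sigma^-)$ together with an initial segment $(f(\sigma), y_\sigma)$ of a path in $T_{\mathrm{side}(\sigma)}$, starting $y_\sigma$ fresh when $\mathrm{side}$ flips between $\sigma^-$ and $\sigma$ and extending $y_{\sigma^-}$ otherwise. Call $\sigma$ \emph{winning} if the construction can be continued below $\sigma$ so that for every infinite branch $x \succeq \sigma$ in the resulting subtree one has $f(x) \in B_{1_A(x)}$; call $\sigma$ \emph{losing} otherwise. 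If the empty string is winning, a canonical selection yields the desired continuous reduction $f$ (the fact that along $\xi$-true initial segments the side correctly identifies $1_A(x)$ is precisely what makes $f(x)$ land in $B_{1_A(x)}$). If the empty string is losing, define $U$ to be the union of the cells $[\sigma]^\prec$ over outermost losing $\sigma$ with $\mathrm{side}(\sigma)=0$; then $B_0 \subseteq U$ and $B_1 \cap U = \emptyset$ by tracing the would-be witnesses back through the construction.

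The main obstacle is controlling the complexity of the losing set. A priori ``losing'' is a $\Pi^1_1$ notion (non-existence of an extending strategy), so a $\Sigma^1_1$-boundedness argument is needed: the derivative rank of the losing tree is bounded by some $\Delta^1_1$ ordinal, and at each rank the losing predicate admits a description that pulls back through the $\preceq_\xi$-tree via \ref{TSP:Sigma_alpha_sets}, placing $U$ in $\Sigma^0_{1+\xi}(\Delta^1_1)$. When $\xi$ is a limit, \ref{TSP:limit} is invoked to reduce this uniformly to the successor-level analysis. Orchestrating the rank computation along the $\preceq_\xi$-structure while sides oscillate, so that the final separator lands in the claimed class rather than a higher one, is the delicate step, analogous in spirit to the bookkeeping discussed in \cref{ssub:iterated_priority_argument}.
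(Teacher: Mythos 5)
The paper does not prove \cref{thm:Louveau_separation} by a fresh construction at all: it is a two-line corollary of \cref{thm:LSR}, applied with $A$ a \emph{lightface} $\Sigma^0_{1+\xi}$-complete set. The point is that if the first alternative of \cref{thm:LSR} held, pulling the assumed boldface $\bSigma^0_{1+\xi}$ separator of $B_0,B_1$ back through the continuous reduction would exhibit $A$ as $\bPi^0_{1+\xi}$, contradicting completeness; hence the second alternative must hold, and that is exactly the desired $\Sigma^0_{1+\xi}(\Delta^1_1)$ separator. Your proposal is missing precisely this closing move: you set up a dichotomy (``winning'' yields a continuous reduction, ``losing'' yields a set $U$), but the conclusion of Louveau's theorem is unconditional, and you never use the hypothesis that $B_0$ and $B_1$ are $\bSigma^0_{1+\xi}$-separated to rule out (or otherwise handle) the winning case. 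Relatedly, you never say what $A$ is; if it is meant to be the given separator, then it is only boldface, so writing $A=[W]^\prec_\xi$ with $W$ c.e.\ is unjustified, and ``absorbing parameters into $\Delta^1_1$ oracles'' begs the question --- the whole content of the theorem is that the separator's parameter can be taken $\Delta^1_1$ even though the hypothesised separator's parameter is arbitrary. Taking $A$ to be a fixed lightface complete set resolves both problems at once.

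There are also two concrete defects in the ``losing'' branch. First, your $U$ is a union of cells $[\sigma]^\prec$ over positions $\sigma$ in the domain of $f$, but $B_0$ and $B_1$ live in the codomain; the separator must be a set of $y$'s, not of $x$'s. In the paper's argument the separator is $U=\{y: \exists\sigma \text{ strongly $\xi$-correct for } y \text{ with } S(y,\sigma)\in W\}$, defined from player~I's strategy $S$ on the $y$-side. Second, your ``losing'' predicate (non-existence of a continuation strategy) is genuinely $\Pi^1_1$, and the appeal to $\Sigma^1_1$-boundedness does not by itself place $U$ in $\Sigma^0_{1+\xi}(\Delta^1_1)$; this is where the real work lies. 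The paper avoids the issue by formulating an honest open/closed game, so that Gale--Stewart determinacy supplies a hyperarithmetic winning strategy for player~I, and then the complexity of $U$ is controlled by the transfinite computation of the complexity of ``strongly $\alpha$-correct'' (\cref{clm:correctness_complexity}), with the extension lemma \cref{claim:main:extending_correct_sequences} --- the actual priority argument, using \ref{TSP:p_function} at successors and \ref{TSP:limit} at limits --- doing the work of ``tracing the would-be witnesses back''. Your sketch contains no analogue of either of these claims, and they are not routine.
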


The proof of \cref{thm:LSR} in \cite{LouveauSR:Strength} relies on a technique of unravelling complicated games into simpler ones. In \cite{LouveauSR:WH}, before they introduce the technique, the authors present a simpler proof of \cref{thm:LSR} for $\xi = 1$ and $\xi=2$ in which a closed game is directly defined without the need for unravelling. The proof we give is a generalisation of this simpler technique to all countable ordinals~$\xi$.

\begin{proof}[Proof of \cref{thm:LSR}]
Fix computable trees $T_0$, $T_1$ such that for $i=0,1$, $B_i$ is the projection of $[T_i]$: $B_i = \{ y \,:\, (\exists z)\,\, (y,z) \in [T_i]\}$. By \ref{TSP:Sigma_alpha_sets}, fix a computable, upwards closed $W\subseteq \w^{<\w}$ such that $[W]^\prec_\xi = A$. 

We define a two player game, in which players~I and~II alternate turns.  On player~I's $i\tth$ turn, they play $x_i \in \omega$.  On player~II's $i\tth$ turn, they play $(y_i, z_i) \in \omega\times \omega$.  For our game, the set of runs in which player I wins will be an open set, so we will explain how to determine partway through the game if player I has already won.

Suppose that $n>0$, player~I has played $x_0, \dots, x_{n-1}$, while player~II has played $(y_1, z_1)$, $\dots$, $(y_{n}, z_{n})$ (it is convenient to begin the indexing for player~II at~1 rather than~0).  So it is player~I's $n\tth$ turn, but first we will determine if they have already won.  Let $\b{x} = (x_0, \dots, x_{n-1}) \in \omega^{<\omega}$.  Then $\b{x}$ has an opinion as to whether the real~$x$ which player I is building will be in $A$: the opinion is determined by whether $\b{x} \in W$ or not.  Whichever it is, player II is responsible for building a real~$y$ in~$B_0$ or~$B_1$, as appropriate, and also a~$z$ which witnesses~$y$'s membership.  However, we will only ask player II to make progress on constructing~$z$ at those stages which appear $\xi$-true at the current stage and which share this opinion about~$x$.  This is made precise in the following.

Define a set of indices $F$ as follows:
\begin{itemize}
    \item If $\b{x} \in W$, let $F = \{ i \in \{1,\dots, n\} : \b{x}\rest{i} \preceq_\xi \b{x} \andd \b{x}\rest{i} \in W\}$;
    \item If $\b{x} \notin W$, let $F = \{ i \in \{1,\dots, n\} : \b{x}\rest{i} \preceq_\xi \b{x} \}$.
\end{itemize}
Note that $n \in F$. Also note that since~$W$ is upwards closed in $(\w^{<\w}, \preceq_\xi)$, if $\b{x}\notin W$ then $\b{x}\rest{i}\notin W$ for all $i\in F$. 

Enumerate~$F$ as $F = \{ a_0 < \dots < a_{k-1}\}$, so $k \ge 1$.  Let $\b{y} = (y_1, \dots, y_k)$ and $\b{z} = (z_{a_0}, \dots, z_{a_{k-1}})$, observing the difference in the subscripts: for~$\b{z}$, we only collect the numbers played by player~II in response to initial segments of~$\b{x}$ which appear to be correct. 

We declare that player I wins if:
\begin{itemize}
    \item $\b{x} \in W$ and $(\b{y}, \b{z}) \not \in T_1$; or
    \item $\b{x} \not \in W$ and $(\b{y}, \b{z}) \not \in T_0$. 
\end{itemize}
Otherwise, the game continues.  If the game continues for~$\omega$ many turns, then player~II wins.

As this is an open/closed game, it is determined. 

\begin{lrclaim}
    If player~II has a winning strategy in the game, then there is a continuous function reducing $(A^\complement,A)$ to $(B_0,B_1)$.
\end{lrclaim}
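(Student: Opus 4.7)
The plan is to extract $f$ from a winning strategy for Player II. Given $x = (x_0, x_1, \ldots) \in \Baire$, let Player I play $x$ move by move while Player II responds via the strategy, producing $(y_1, z_1), (y_2, z_2), \ldots$; put $f(x) := (y_1, y_2, \ldots)$. Continuity is immediate since $y_i$ is determined by the strategy from the finite input $(x_0, \ldots, x_{i-1})$. Because $B_0 \cap B_1 = \emptyset$, it suffices to prove the two implications $x \in A \Rightarrow f(x) \in B_1$ and $x \notin A \Rightarrow f(x) \in B_0$.

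Fix $x \in \Baire$, let $y = f(x)$, and let $(z_i)_{i \ge 1}$ be the $z$-moves produced by the strategy. Set $\epsilon = 1$ if $x \in A$ and $\epsilon = 0$ otherwise, and define
\[
    J := \left\{\, i \ge 1 \,:\, x\rest{i} \prec_\xi x,\ \text{and if } x \in A \text{ then } x\rest{i} \in W \,\right\}.
\]
This set is infinite: by~\ref{TSP:unique_path} there are infinitely many $\xi$-true initial segments of $x$, and in the case $x \in A$ the membership $x \in [W]^\prec_\xi$ together with the upwards-closure of $W$ in $(\omega^{<\omega}, \preceq_\xi)$ (imposable as in earlier proofs) makes the $W$-condition hold on a tail. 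Enumerate $J = \{j_0 < j_1 < \cdots\}$ and set $z^* := (z_{j_0}, z_{j_1}, \ldots) \in \Baire$. It then suffices to show $(y, z^*) \in [T_\epsilon]$, which gives $y \in B_\epsilon$.

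The main step is to verify, for each $m$, that the set $F$ computed at stage $n := j_m$ of the game is exactly $\{j_0, \ldots, j_m\}$. At this stage $\bar{x} = x\rest{j_m}$ is $\xi$-true for $x$, so $\bar{x} \in W$ iff $x \in A$, and the appropriate clause in the definition of $F$ applies. For $F \subseteq \{j_0, \ldots, j_m\}$: if $1 \le i \le n$ and $x\rest{i} \preceq_\xi \bar{x}$, then since $\bar{x} \prec_\xi x$ and $\{\tau : \tau \prec_\xi x\}$ is the unique infinite path of $(\{\tau : \tau \prec x\}, \preceq_\xi)$ by~\ref{TSP:unique_path}, the tree-theoretic closure of paths under ancestors (combined with~\ref{TSP:tree}) forces $x\rest{i} \prec_\xi x$; in the $x \in A$ case, the extra requirement $x\rest{i} \in W$ is handled by upwards-closure of $W$ applied to $x\rest{j_0}$. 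Conversely, for $k \le m$, the $\xi$-true segments $x\rest{j_k}$ and $\bar{x}$ are $\preceq_\xi$-comparable by~\ref{TSP:tree}, so $x\rest{j_k} \preceq_\xi \bar{x}$, and $x\rest{j_k} \in W$ (when needed) is built into the definition of $J$; thus $j_k \in F$.

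Knowing $F = \{j_0, \ldots, j_m\}$ has size $m+1$, we read off $\bar{y} = (y_1, \ldots, y_{m+1})$ and $\bar{z} = (z_{j_0}, \ldots, z_{j_m})$. Since the strategy is winning, Player I has not won at this stage, so $(\bar{y}, \bar{z}) \in T_\epsilon$. As $m \to \infty$ these prefixes assemble into $(y, z^*) \in [T_\epsilon]$, giving $y \in B_\epsilon$. The principal difficulty is the exact determination $F = \{j_0, \ldots, j_m\}$: the ``$\subseteq$'' direction uses that $\preceq_\xi$-ancestors of a node on the unique $\xi$-true path of $x$ lie on that path, ruling out indices where $x\rest{i}$ merely appears $\xi$-true at stage $j_m$; the ``$\supseteq$'' direction uses the chain property of $\preceq_\xi$-predecessors from~\ref{TSP:tree}.
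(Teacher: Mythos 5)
Your proposal is correct and follows essentially the same route as the paper: run the strategy against $x$, take $y$ as the output, collect the $z$-moves at the indices that are genuinely $\xi$-true for $x$ (and in $W$ when $x\in A$), and use the fact that player~I never wins to get longer and longer initial segments of $(y,z^*)$ into $T_\epsilon$. Your verification that $F=\{j_0,\dots,j_m\}$ at stage $j_m$ is a correct elaboration of what the paper leaves implicit (though note that in the $\subseteq$ direction the requirement $x\rest{i}\in W$ is already built into the definition of $F$ when $\b{x}\in W$, so the appeal to upwards-closure there is unnecessary; upwards-closure is what you need to see that $J$ is infinite).
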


\begin{proof}
Suppose that~$S$ is a winning strategy for player~II.  For $x \in \Baire$, let $y, z \in \Baire$ be the sequences generated by player II when player I plays $x$ and player II plays according to $S$.  We claim that $x \mapsto y$ is our desired continuous reduction.

If $x \in A$, let $a_0 < a_1 < \cdots$ enumerate those $a < \omega$ with $x\rest{a} \preceq_\xi x$ and $x\rest{a} \in W$.  Let $v = (z_{a_0}, z_{a_1}, \dots)$.  Then for each $k$, since player~II had not lost at the start of player I's $a_k\tth$ turn, $(y\rest{k}, v\rest{k}) \in T_1$. So $(y, v) \in [T_1]$, and thus $y \in B_1$ as desired.

If $x \not \in A$, let $a_0 < a_1 < \cdots$ enumerate those $a < \omega$ with $x\rest{a} \preceq_\xi x$; note that $x\rest{a} \not \in W$ for such~$a$.  The same argument shows that $y \in B_0$, as desired.
\end{proof}

Now suppose instead that player~I has a winning strategy~$S$ in the game. In the rest of the proof, we show how to use~$S$ to construct a $\Sigma^0_{1+\xi}(S)$ separator between~$B_0$ and~$B_1$. Since the game is open for player~I, there is a hyperarithmetic winning strategy~$S$, so the separator can be taken to be $\Sigma^0_{1+\xi}(\Delta^1_1)$ as required. 

\medskip

For $y \in \Baire$ and $\sigma \in \omega^{<\omega}$, let $S(y, \sigma)$ be the sequence $(x_0, \dots, x_n)$ which results from player I playing according to~$S$ and player~II playing $(y\rest{|\sigma|}, \sigma)$.  Note that $|S(y,\s)|= |\s|+1$. It will be notationally convenient to posit the existence of a string $\pi$ with $|\pi| = -1$ and $S(y, \pi) = \emptystring$.

Roughly, the idea is the following. Given~$y$, we need to decide on which side of the separator we put it (on the $B_0$ side or the~$B_1$ side). We put it on the~$B_0$ side if there is some ``credible''~$\s$ for which $S(y,\s)\in W$. We need to ensure that if there is such~$\s$ then $y\notin B_1$ (and similarly, if there is no such~$\s$, then $y\notin B_0$). The argument for contradiction will be that if $y\in B_1$, say $(y,z)\in [T_1]$, then we can start from~$\s$ and use~$z$ to defeat~$S$. To do this, we will define a sequence $\s_0,\s_1,\dots$ such that for each~$i$, after player~I's response to our play of $(y,\s_i)$, we can play~$z_i$ and continue to the next~$\s_{i+1}$. The notion of credibility of a sequence, which also allows us to continue the inductive construction, is that of a ``strongly $\xi$-correct'' sequence (\cref{def:alpha-correct}). The complexity of the separator depends on the complexity of this notion of correctness, which is computed in \cref{clm:correctness_complexity}. The heart of the argument is \cref{claim:main:extending_correct_sequences}, which ensures that we can take another step in the construction we outlined. 

\medskip

We will use the strategy~$S$ to pull back the true stage relations onto strings~$\sigma$, which we think of as potential second coordinates for player II to play along a given real~$y$. 
\begin{lrdef}
Let $\alpha \le \xi$ and $y \in \Baire$. For $\sigma, \tau \in \omega^{<\omega}\cup \{\pi\}$, we write $\sigma \tleq_\alpha^y \tau$ when $\sigma \preceq \tau$ and $S(y, \sigma) \preceq_\alpha S(y, \tau)$.
\end{lrdef}

We will now define a sort of absolutely true stage for these new relations, moderated by the need to avoid letting player I win.

\begin{lrdef}\label{def:alpha-correct}
For $y \in \omega^\omega$, $\sigma \in \omega^{<\omega} \cup \{\pi\}$ and $\alpha \le \xi$, we define what it means for $\sigma$ to be \emph{$\alpha$-correct} or \emph{strongly $\alpha$-correct} for~$y$.
\begin{itemize}
\item $\sigma$ is 0-correct for~$y$ if in the partial play of the game where player II plays $(y\rest{|\sigma|}, \sigma)$ and player I plays according to $S$, player I has not yet won.
\item $\sigma$ is $(\alpha+1)$-correct for $y$ if it is strongly $\alpha$-correct for $y$, and for every $\tau \treq{y}{\alpha} \sigma$ which is strongly $\alpha$-correct for $y$, $\tau \treq{y}{\alpha+1}  \sigma$.
\item For $\lambda$ a limit, $\sigma$ is $\lambda$-correct for $y$ if it is $\beta$-correct for $y$ for all $\beta < \lambda$.
\item $\sigma$ is strongly $\alpha$-correct for $y$ if for every $\tau \tleq_\alpha^y \sigma$, $\tau$ is $\alpha$-correct for $y$.
\end{itemize}
\end{lrdef}

\begin{lrclaim} \label{clm:correctness_basics}
Let $y\in \Baire$, $\alpha\le \xi$, and $\s\in \w^{<\w}\cup \{\pi\}$. 
\begin{sublemma}
    \item \label{item:correctness_basics:strong_implies_weak} 
    If~$\sigma$ is strongly $\alpha$-correct for $y$, then it is $\alpha$-correct for $y$.
    
    \item \label{item:correctness_basics:strong-0-correct}
     $\s$ is strongly 0-correct for~$y$ if and only if it is 0-correct for~$y$. 

    \item \label{item:correctness_basics:smaller_beta}
     If $\sigma$ is $\alpha$-correct for $y$, then it is strongly $\beta$-correct for $y$ for every $\beta < \alpha$.

    \item \label{item:correctness_basics:initial_segment}
     If $\sigma$ is strongly $\alpha$-correct for $y$, then every $\rho \tleq_\alpha^y \sigma$ is strongly $\alpha$-correct for~$y$.

    \item \label{item:correctness_basics:initial_segment2}
    If $\s\preceq \tau$ and both~$\s$ and~$\tau$ are $\alpha$-correct for~$y$, then $\s\tleq^y_\alpha \tau$. 

    \item \label{item:correctness_basics:root}
     $\pi$ is strongly $\alpha$-correct for~$y$.
\end{sublemma}
\end{lrclaim}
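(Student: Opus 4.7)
Items (a), (b), and (d) should reduce to direct unpacking of definitions, while (c), (e), and (f) call for transfinite inductions on $\alpha$. For (a), reflexivity of $\tleq_\alpha^y$ holds because $\preceq$ is reflexive and $\preceq_\alpha$ is a tree partial order on $\w^{\le\w}$, so $\sigma\tleq_\alpha^y\sigma$, and the defining clause of strong $\alpha$-correctness instantiated at $\sigma$ yields $\alpha$-correctness. For (b), the forward direction is (a); conversely, if $\tau\tleq_0^y\sigma$ then $S(y,\tau)\preceq S(y,\sigma)$ by \ref{TSP:extends_prec}, and since player~I follows the deterministic strategy $S$, the partial play recorded by $(y\rest{|\tau|},\tau)$ is an initial segment of the one recorded by $(y\rest{|\sigma|},\sigma)$, so player~I has not yet won in the former whenever they have not yet won in the latter. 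Part (d) is transitivity: $\tau\tleq_\alpha^y\rho\tleq_\alpha^y\sigma$ yields $\tau\tleq_\alpha^y\sigma$ by transitivity of $\preceq$ and $\preceq_\alpha$, and strong $\alpha$-correctness of $\sigma$ then gives $\alpha$-correctness of $\tau$.

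For (c), induct on $\alpha$: at a successor $\gamma+1$, the definition of $(\gamma+1)$-correctness gives strong $\gamma$-correctness, hence $\gamma$-correctness by (a), and the inductive hypothesis supplies strong $\beta$-correctness for $\beta<\gamma$. At a limit $\lambda$, for each $\beta<\lambda$ also $\beta+1<\lambda$, so $\lambda$-correctness yields $(\beta+1)$-correctness, whose definition directly includes strong $\beta$-correctness. For (e), again by induction: the base case uses $\sigma\preceq\tau$, strategy determinism, and \ref{TSP:extends_prec} to conclude $S(y,\sigma)\preceq_0 S(y,\tau)$; the limit step applies \ref{TSP:continuous} to the inductive facts $\sigma\tleq_\beta^y\tau$ for $\beta<\lambda$.

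The subtlest point is the successor step of (e), where one must match the quantifier in the definition of $(\gamma+1)$-correctness of $\sigma$ with what the inductive hypothesis delivers. Assuming $\sigma\preceq\tau$ are both $(\gamma+1)$-correct, (a) applied to the strong $\gamma$-correctness of each yields $\gamma$-correctness of both, and the inductive hypothesis then produces $\sigma\tleq_\gamma^y\tau$; moreover $\tau$ is strongly $\gamma$-correct by the first clause in its $(\gamma+1)$-correctness. Thus $\tau$ is a strongly $\gamma$-correct $\gamma$-extension of $\sigma$, and the second clause in the definition of $(\gamma+1)$-correctness of $\sigma$, applied with $\tau$ as the bound variable, gives $\sigma\tleq_{\gamma+1}^y\tau$.

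Finally, for (f), observe that $|\pi|=-1$ forces any $\rho\tleq_\alpha^y\pi$ to equal $\pi$, so strong $\alpha$-correctness of $\pi$ reduces to its $\alpha$-correctness. A last induction closes the claim: at level~$0$ the empty partial play has no winning check to fail; at a successor $\gamma+1$, every $\tau$ with $\tau\trianglerighteq_\gamma^y\pi$ satisfies $\tau\trianglerighteq_{\gamma+1}^y\pi$, which reduces to $S(y,\pi)=\emptystring\preceq_{\gamma+1}S(y,\tau)$, an instance of the root property in \ref{TSP:tree}; the limit step follows from the inductive hypothesis together with~(a).
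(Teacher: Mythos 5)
Your proposal is correct and follows essentially the same approach as the paper, which gives only a one-line justification per item (reflexivity for (a), the game ending once player I wins for (b), induction for (c) and (e), transitivity for (d), and the root property of \ref{TSP:tree} for (f)); you have simply filled in those sketches, including the successor step of (e) where the quantifier in the definition of $(\alpha+1)$-correctness is instantiated at $\tau$, which is exactly the intended argument.
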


\begin{proof}
    \ref{item:correctness_basics:strong_implies_weak} follows from the relations $\preceq_\alpha$ (and thus $\tleq^y_\alpha$) being reflexive. \ref{item:correctness_basics:strong-0-correct} holds because once player~I wins, the game ends. \ref{item:correctness_basics:smaller_beta} is by induction on~$\alpha$.  \ref{item:correctness_basics:initial_segment} follows from the transitivity of $\le_\alpha$ (and thus of $\tleq_\alpha^y$). \ref{item:correctness_basics:initial_segment2} is proved by induction on~$\alpha$. 
     \ref{item:correctness_basics:root} follows from $\emptystring\preceq_\alpha \s$ for all~$\s$ (\ref{TSP:tree}). 
\end{proof}

Note that \ref{item:correctness_basics:initial_segment} and \ref{item:correctness_basics:initial_segment2} of \cref{clm:correctness_basics} together imply that if $\sigma$ is strongly $\alpha$-correct for $y$, then for all $\rho\preceq \s$, $\rho \tleq_\alpha^y \sigma$ if and only if $\rho$ is strongly $\alpha$-correct for~$y$ if and only if $\rho$ is $\alpha$-correct for~$y$.

\begin{lrclaim} \label{clm:correctness_complexity}
Let $\alpha\le \xi$. The relations ``$\s$ is $\alpha$-correct for~$y$'' and ``$\s$ is strongly $\alpha$-correct for~$y$'' are:
\begin{orderedlist}
\item $\Delta^0_1(S)$ if $\alpha=0$;
\item $\Pi^0_\alpha(S)$ if $0<\alpha<\w$;
\item $\Delta^0_\alpha(S)$ for limit $\alpha$; and
\item $\Pi^0_{\alpha-1}(S)$ if $\alpha>\w$ is a successor. 
\end{orderedlist}
\end{lrclaim}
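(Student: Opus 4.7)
The proof proceeds by transfinite induction on $\alpha \le \xi$, treating $\alpha$-correctness and strong $\alpha$-correctness together. The key background tool is that, restricted to finite strings, the relations $\tleq^y_\beta$ are uniformly computable from $S$ and $y$: by \ref{TSP:computable} the relation $\preceq_\beta$ is computable on $\w^{<\w}$, and $S(y,\sigma)$ is a finite string computable from $y$, $\sigma$, and $S$. Consequently, a bounded universal over initial segments $\tau \preceq \sigma$ (of which there are at most $|\sigma|+1$) contributes nothing to the complexity, while the countable universal $\forall \tau \in \w^{<\w}$ appearing in the definition of $(\alpha+1)$-correctness contributes one $\Pi$-level.

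The base case $\alpha=0$ is the decidable condition that the partial play is still alive, giving $\Delta^0_1(S)$; strong 0-correctness coincides with 0-correctness by \cref{clm:correctness_basics}\ref{item:correctness_basics:strong-0-correct}. For the successor step, unfolding the definition presents $(\alpha+1)$-correct as the conjunction of strongly $\alpha$-correct (inductive class $\Gamma_\alpha$) with a countable universal $\forall \tau$ whose body is an implication with $\Gamma_\alpha$-premise and decidable conclusion, hence in the Boolean closure of $\Gamma_\alpha$ together with $\Delta^0_1(S)$. A case split on whether $\alpha$ is $0$, finite, limit, or an infinite successor then produces exactly the classes stated in (ii)--(iv); in particular, starting from $\Delta^0_\lambda(S)$ at a limit $\lambda$ lands at $\Pi^0_\lambda(S)$ for $\lambda+1$, matching (iv). Strong $(\alpha+1)$-correctness is a bounded universal over $(\alpha+1)$-correctness, remaining in the same class.

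For the limit case $\lambda$, the definition gives $\lambda$-correctness as the countable conjunction over $\beta < \lambda$ of ``$\sigma$ is $\beta$-correct''; by induction each conjunct lies in $\Pi^0_{<\lambda}(S)$ uniformly in $\beta$, placing the conjunction in $\Pi^0_\lambda(S)$. The matching $\Sigma^0_\lambda(S)$ bound---the substantive content of (iii)---is obtained by invoking property \ref{TSP:limit}: for $k = |S(y,\sigma)|_\lambda$, the relation $\preceq_\lambda$ above $S(y,\sigma)$ is already determined by $\preceq_{\lambda_k}$, so $\lambda$-level quantifications involving $\sigma$ reduce to $\lambda_k$-level ones, and the uniform inductive bounds then place the resulting expression in $\Pi^0_{<\lambda}(S) \subseteq \Sigma^0_\lambda(S)$. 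Strong $\lambda$-correctness stays in $\Delta^0_\lambda(S)$ via a bounded universal over $\lambda$-correctness. The main obstacle is precisely this $\Sigma^0_\lambda$ side at the limit: the direct unfolding of the definition yields only $\Pi^0_\lambda$, and landing in $\Delta^0_\lambda$ requires the reduction through \ref{TSP:limit} together with careful uniform tracking of the complexity bounds through the earlier steps of the induction.
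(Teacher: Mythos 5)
Your proposal is correct and follows essentially the same route as the paper: induction on $\alpha$, with the strong relation's complexity reduced to the plain one via the computability of $\preceq_\alpha$ on finite strings (a finite conjunction over initial segments), the successor case handled by counting the one unbounded universal quantifier, and the limit case — the only substantive one — handled by using \ref{TSP:limit} to reduce $\lambda$-correctness to strong $\lambda_k$-correctness for the computable index $k = |S(y,\sigma)|_\lambda$, which lands the relation below level $\lambda$ and hence in $\Delta^0_\lambda(S)$.
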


\begin{proof}
We prove this by induction on~$\alpha$. Technically, of course, this is an instance of effective transfinite recursion: for the limit case, we need the relations to be uniformly in their classes. 
For every~$\alpha$, the complexity of the strong relation follows from the complexity of  ``$\s$ is $\alpha$-correct for~$y$'' by the fact that the relations~$\le_\alpha$ are uniformly computable (\ref{TSP:computable}). 

All cases are immediate, except for the limit case~(iii). Suppose that $\lambda \le \xi$ is a limit ordinal. We use \ref{TSP:limit}; let $\seq{\lambda_k}$ be given by that property. 

Given~$y$ and~$\sigma$, let $k = |S(y,\s)|_\lambda$. 
We claim that~$\s$ is $\lambda$-correct for~$y$ if and only if it is strongly $\lambda_k$-correct for~$y$. One direction follows from \cref{clm:correctness_basics}\ref{item:correctness_basics:smaller_beta}. For the other direction, suppose that~$\s$ is strongly $\lambda_k$-correct for~$y$. By induction on $\beta\in [\lambda_k,\lambda]$, we can show that every $\rho\tleq^y_{\lambda_k} \s$ is strongly $\beta$-correct for~$y$. 

This is mostly chasing the definitions, using \cref{clm:correctness_basics}. The main point is that for all $\rho\tleq^y_{\lambda_k} \s$ we have $|S(y,\rho)|_{\lambda_k} = |S(y,\rho)|_\lambda \le k$, and so for all~$\tau$, if $\rho\tleq^y_{\beta} \tau$ then $\rho \tleq^y_\lambda \tau$, and so $\rho \tleq^y_{\beta+1} \tau$.    
\end{proof}

\begin{lrclaim} \label{claim:main:extending_correct_sequences}
Let $y \in \Baire$ and $\alpha\le \xi$. Suppose that $\rho \in \omega^{<\omega}\cup \{\pi\}$ is strongly $\alpha$-correct for~$y$.  If~$\s$ is a one-element extension of~$\rho$ which is 0-correct for~$y$, then there exists an $\tau \succeq \s$  which is strongly $\alpha$-correct for $y$.
\end{lrclaim}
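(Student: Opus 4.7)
The proof is by transfinite induction on $\alpha$. For the base case $\alpha = 0$, \cref{clm:correctness_basics}\ref{item:correctness_basics:strong-0-correct} gives that $\s$ is strongly $0$-correct (since it is $0$-correct by hypothesis), so $\tau = \s$ suffices.

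For the limit case $\alpha = \lambda$, we follow the approach used in the proof of \cref{clm:correctness_complexity}: using property \ref{TSP:limit}, strong $\lambda$-correctness of a string at height $k$ in $(\w^{<\w}, \preceq_\lambda)$ reduces to strong $\lambda_k$-correctness. Since $\rho$ is strongly $\lambda_k$-correct for the appropriate $k$, applying the inductive hypothesis at $\lambda_{k+1} < \lambda$ (to account for the one-step extension to $\s$) produces $\tau \succeq \s$ strongly $\lambda_{k+1}$-correct, which one verifies is in fact strongly $\lambda$-correct using \ref{TSP:limit}.

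The successor case $\alpha = \beta+1$ is the main content. Since $\rho$ is strongly $(\beta+1)$-correct, it is in particular strongly $\beta$-correct, so by the inductive hypothesis applied at level $\beta$, the set $\+T = \{\tau \succeq \s : \tau \text{ is strongly } \beta\text{-correct}\}$ is nonempty. Choose $\tau^* \in \+T$ minimizing $p_\beta(S(y, \tau^*))$, and among such minimizers of minimum length. We claim $\tau^*$ is strongly $(\beta+1)$-correct. If not, some strongly $\beta$-correct $\lambda \tleq^y_\beta \tau^*$ fails to satisfy $\lambda \tleq^y_{\beta+1} \tau^*$; by \ref{TSP:p_function} there is a strongly $\beta$-correct $\mu$ with $\lambda \prec \mu \preceq \tau^*$, $\mu \tleq^y_\beta \tau^*$, and $p_\beta(S(y, \mu)) < p_\beta(S(y, \lambda))$. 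A case analysis on the positions of $\lambda, \mu$ relative to $\rho$ and $\s$ closes most cases: if both $\lambda, \mu \preceq \rho$, this contradicts $\rho$'s $(\beta+1)$-correctness inherited from its strong $(\beta+1)$-correctness; if $\mu \succeq \s$ with $p_\beta(S(y,\mu)) < p_\beta(S(y,\tau^*))$, we contradict minimality of $p_\beta$; if $\mu \succeq \s$ with $p_\beta(S(y,\mu)) = p_\beta(S(y,\tau^*))$ and $\mu \prec \tau^*$, we contradict minimality of length.

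The main obstacle is the remaining case, in which $\lambda \preceq \rho$ while $\mu = \tau^*$; here $p_\beta(S(y, \tau^*)) < p_\beta(S(y, \lambda)) \le p_\beta(S(y, \rho))$, forcing $\rho \not\tleq^y_{\beta+1} \tau^*$. To close this case one must refine the choice of $\tau^*$ to further require $\rho \tleq^y_{\beta+1} \tau^*$; the existence of such $\tau^*$ in $\+T$ is ensured by a separate argument using property \RefClub, which allows one to propagate $\rho$'s strong $(\beta+1)$-correctness past $\s$. Once $\tau^*$ is shown to be $(\beta+1)$-correct, strong $(\beta+1)$-correctness of $\tau^*$ follows by applying the same reasoning to every $\kappa \tleq^y_{\beta+1} \tau^*$, which inherits the required minimality properties via \RefClub.
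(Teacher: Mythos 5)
Your base case, and your opening move in the successor case (minimizing $p_\beta(S(y,\cdot))$ over the strongly $\beta$-correct extensions of $\s$), coincide with the paper's; but the verification that $\tau^*$ is strongly $(\beta+1)$-correct has a genuine gap. Strong $(\beta+1)$-correctness of $\tau^*$ says that every $\nu\tleq^y_{\beta+1}\tau^*$ is $(\beta+1)$-correct, and $(\beta+1)$-correctness of such a $\nu$ quantifies over \emph{all} strongly $\beta$-correct $\eta$ with $\nu\tleq^y_\beta\eta$ --- including $\eta$ that are not $\preceq\tau^*$ and may be $\preceq$-incomparable with $\tau^*$. Your stated negation (``some strongly $\beta$-correct $\lambda\tleq^y_\beta\tau^*$ fails $\lambda\tleq^y_{\beta+1}\tau^*$'') and the ensuing case analysis only ever examine strings below $\tau^*$, so the argument never addresses the witnesses $\eta$ that actually matter; it establishes neither $(\beta+1)$-correctness nor strong $(\beta+1)$-correctness of $\tau^*$. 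Handling those $\eta$ is the heart of the paper's proof: for $\nu\tleq^y_{\beta+1}\tau^*$ with $\s\preceq\nu$ and any strongly $\beta$-correct $\eta\treq{y}{\beta}\nu$, every intermediate $\eta'$ with $\nu\tleq^y_\beta\eta'\tleq^y_\beta\eta$ is itself strongly $\beta$-correct (\cref{clm:correctness_basics}\ref{item:correctness_basics:initial_segment}) and extends $\s$, hence was a candidate in the minimization; so $p_\beta(S(y,\eta'))\ge p_\beta(S(y,\tau^*))\ge p_\beta(S(y,\nu))$, and \ref{TSP:p_function} yields $\nu\tleq^y_{\beta+1}\eta$. (The case $\nu\prec\s$ is disposed of via $\nu\tleq^y_{\beta+1}\rho$ and $\rho$'s strong $(\beta+1)$-correctness.) Relatedly, your ``main obstacle'' is illusory: $\rho\tleq^y_{\beta+1}\tau^*$ requires no refinement of the choice of $\tau^*$ and no appeal to \RefClub{} --- it is immediate from the definition of $(\beta+1)$-correctness of $\rho$ applied to the strongly $\beta$-correct string $\tau^*\treq{y}{\beta}\rho$ --- and in any case the ``separate argument'' you invoke there is never supplied.

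In the limit case you omit the step that makes \ref{TSP:limit} applicable: one must choose $\tau$ to be $\tleq^y_{\lambda_k}$-\emph{minimal} among the strongly $\lambda_k$-correct extensions of $\s$, which forces $|S(y,\tau)|_\lambda=k$ and hence lets $\tau\tleq^y_{\lambda_k}\eta$ upgrade to $\tau\tleq^y_\lambda\eta$. For an arbitrary strongly $\lambda_{k+1}$-correct $\tau\succeq\s$, the height $|S(y,\tau)|_\lambda$ may exceed $k+1$, and then strong $\lambda_{k+1}$-correctness does not imply strong $\lambda$-correctness, so the verification you defer to the reader fails.
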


For the purposes of this claim, the empty sequence is a one-element extension of~$\pi$. 

\begin{proof}
The argument is by induction on $\alpha$. For $\alpha=0$ we can take $\tau=\s$ (\cref{clm:correctness_basics}\ref{item:correctness_basics:strong-0-correct}).

\medskip

For the successor case, we use \ref{TSP:p_function}. Suppose that the lemma holds for $\alpha<\xi$, and suppose that~$\rho$ is strongly $(\alpha+1)$-correct for~$y$. Let~$p_\alpha$ be given by \ref{TSP:p_function}.  For brevity, we will write $p(\eta)$ in place of $p_\alpha(S(y, \eta))$. By induction, there are $\tau \succeq \s$ which are strongly $\alpha$-correct for $y$.  Amongst those~$\tau$, choose one to minimize $p(\tau)$.  We claim that this~$\tau$ is strongly $(\alpha+1)$-correct for~$y$.  Note that since~$\rho$ is $(\alpha+1)$-correct for~$y$ and~$\tau$ is strongly~$\alpha$-correct for~$y$, $\rho\tleq^y_\alpha \tau$ (by \cref{clm:correctness_basics}\ref{item:correctness_basics:initial_segment2}) and so $\rho \tleq_{\alpha+1}^y \tau$ (by \cref{def:alpha-correct}). 

We must show that every $\nu \tleq_{\alpha+1}^y \tau$ is $(\alpha+1)$-correct for~$y$. If $\nu \prec \s$, then $\nu \preceq \rho$; since $\preceq_{\alpha+1}$ is a tree (\ref{TSP:tree}), $\nu \tleq_{\alpha+1}^y \rho$.  As~$\rho$ is strongly $(\alpha+1)$-correct for $y$, $\nu$ is $(\alpha+1)$-correct for~$y$.  So we may assume that $\s \preceq \nu$.

We need to show that for every $\eta \treq{y}{\alpha} \nu$ which is strongly $\alpha$-correct for~$y$, it is the case that $\nu \tleq_{\alpha+1}^y \eta$.  By \ref{TSP:p_function}, since $\nu\tleq^{y}_{\alpha+1} \tau$, $p(\nu) \le p(\tau)$.  As~$\eta$ is strongly $\alpha$-correct for $y$, for every $\eta'$ with $\nu \tleq_\alpha^y \eta' \tleq_\alpha^y \eta$, $\eta'$ is strongly $\alpha$-correct for~$y$ (\cref{clm:correctness_basics}\ref{item:correctness_basics:initial_segment}).  Thus such~$\eta'$ was a candidate for~$\tau$, so $p(\tau) \le p(\eta')$, and thus $p(\nu) \le p(\eta')$.  By \ref{TSP:p_function} again, $\nu \tleq_{\alpha+1}^y \eta$.  

\medskip

For $\alpha$ a limit, we again use \ref{TSP:limit}. Let $\seq{\alpha_k}$ be a cofinal sequence in~$\alpha$ supplied by that property; let $k = |S(y,\rho)|_\alpha+1$. By induction, there is some $\tau\succeq \s$ which is strongly $\alpha_k$-correct for~$y$. Among such~$\tau$, fix one minimal with respect to~$\tleq^y_{\alpha_k}$. So $\nu\tle^y_{\alpha_k} \tau$ implies $\nu\tleq^y_{\alpha_k} \rho$. Since $|S(y,\nu)|_{\alpha_k} = |S(y,\nu)|_\alpha$ for all $\nu \tleq^y_{\alpha_k} \rho$, we conclude that $|S(y,\tau)|_\alpha = k$.
 Hence, for all~$\eta$, if $\tau\tleq^y_{\alpha_k} \eta$ then $\tau\tleq^y_\alpha \eta$. As~$\rho$ is strongly $\alpha$-correct for~$y$, this suffices to show, by induction, on $\beta\in [\alpha_k,\alpha]$, that~$\tau$ is strongly~$\beta$-correct for~$y$.
\end{proof}

We are now ready to define a separator~$U$.
\[
U = \{ y : \text{ there is some $\s\in \w^{<\w}$, strongly $\xi$-correct for~$y$, with } S(y, \sigma) \in W\}.
\]
By~\cref{clm:correctness_complexity}, the set~$U$ is $\Sigma^0_{1+\xi}(S)$. We need to show that $B_0\subseteq U$ and $B_1\subseteq U^\complement$. 

Suppose, for a contradition, that there is some $y\in U\cap B_1$. Fix $v = (v_0, v_1, \dots) \in \Baire$ such that $(y, v) \in [T_1]$. We define a sequence $\s_0\prec\s_1 \prec\cdots$ of sequences which together with~$y$ will give a play for~II which defeats the strategy~$S$. We ensure that:
\begin{orderedlist}
    \item Each~$\s_i$ is strongly $\xi$-correct for~$y$; 
    \item $\s_{i+1}(|\s_i|) = v_i$; and
    \item for each~$i$, 
    \[
        \{\rho \in \w^{<\w}  \,:\,  \rho \tleq^y_\xi \s_i \andd S(y, \rho) \in W\} = \{ \sigma_j \,:\, j \le i\}.
    \]
\end{orderedlist}
For~$\s_0$, we choose some sequence witnessing that $y\in U$, minimal such; so for all $\rho\tle^y_\xi \s_0$ in~$\w^{<\w}$ we have $S(y,\rho)\notin W$. Note that $\s_0\ne \pi$. Suppose that~$\s_i$ has been chosen. We then consider $\tau  = \s_i\conc v_i$; we wish to show that~$\tau$ is 0-correct for~$y$, i.e., that player~I does not win in response to $(y\rest{|\tau|},\tau)$. Since~$\s_i$ is $\xi$-correct for~$y$, it is 0-correct for~$y$; player~I has not won after we play $(y\rest{|\s_i|},\s_i)$. After playing $(y\rest{|\tau|},\tau)$, by~(iii) above, the set~$F$ of indices used to assess whether~I wins is $\{|\s_j|+1\,:\, j \le i\}$ and so the corresponding $\b{z}$ is $v\rest{(i+1)}$. Since $(y\rest{(i+1)},\b{z})\in T_1$, $\tau$ is 0-correct for~$y$ as required.  

We can therefore appeal to \cref{claim:main:extending_correct_sequences}. We choose~$\s_{i+1}$ as given by the claim, of minimal length.  This ensures~(iii) for~$i+1$ (we use the fact that~$W$ is upwards closed in $(\w^{<\w},\preceq_\xi)$). 

\smallskip

The other case, that $y\in B_0\setminus U$, is almost identical. In this case we will have 
 \[
        \{\rho \in \w^{<\w} \,:\,  \rho \tleq^y_\xi \s_i \} = \{ \sigma_j \,:\, j \le i\}.
    \]
The empty sequence is 0-correct for~$y$, and so by \cref{claim:main:extending_correct_sequences} and \cref{clm:correctness_basics}\ref{item:correctness_basics:root}, there is some $\s\in \w^{<\w}$ which is strongly $\xi$-correct for~$y$. We start with~$\s_0$ being such a string of minimal length (again $\s_0\ne \pi$). 
\end{proof}

As mentioned, Theorem~\ref{thm:LSR} holds not just for the classes $\Sigma^0_{1+\xi}$, but for any non-self-dual Borel Wadge class, and our methods can be used to show this.  From that, as in \cite{LouveauSR:Strength}, we can obtain Louveau separation for all non-self-dual Borel Wadge classes.  The requirement $\xi < \wock$ is replaced by the requirement that the Wadge class must have a~$\Delta^1_1$ name.  Of course, some care must be taken in defining what a name for a Wadge class is, and what the corresponding lightface class is.  See the companion paper~\cite{companionpaper} for further details.

\subsection{Reverse mathematics} 
\label{sub:reverse_mathematics}

As mentioned in the introduction, the proofs we gave are sufficiently effective so that they can be made within the system $\ATR_0$ of reverse mathematics. This is in some sense the weakest system which allows a meaningful development of Borel sets (\cite[Ch.V]{Simpson}). The main point is that the true stages machinery is effective, and so can be defined in $\ATR_0$ and all of its properties are provable in~$\ATR_0$. Thus:

\begin{theorem} \label{thm:ATR}
    Louveau's separation theorem (\cref{thm:Louveau_separation}) is provable in $\ATR_0$. 
\end{theorem}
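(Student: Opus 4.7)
The plan is to formalise the proof of Theorem~\ref{thm:LSR} inside $\ATR_0$, from which Theorem~\ref{thm:Louveau_separation} follows by taking $A$ to be a universal $\Sigma^0_{1+\xi}$ set — a construction which is arithmetical-transfinite-recursive in $\xi$ and so available in $\ATR_0$. (If $B_0$ and $B_1$ admit a boldface $\bSigma^0_{1+\xi}$ separator then the $\bSigma^0_{1+\xi}$-completeness of $A$ rules out the first alternative of Theorem~\ref{thm:LSR}, leaving the $\Sigma^0_{1+\xi}(\Delta^1_1)$ separator of the second alternative.)

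First, I would develop the true stages machinery inside $\ATR_0$. The construction of the strings $\s^{(\alpha)}$ and the relations $\preceq_\alpha$ sketched in Section~\ref{sub:enter_true_stages} proceeds by recursion on $\alpha$ using, at each stage, only computable operations applied to the previously-defined data. This is precisely the shape of argument that arithmetical transfinite recursion is designed to handle: given any countable well-ordering $\xi$ coded in the model, $\ATR_0$ produces the full array $\langle \preceq_\beta : \beta \le \xi\rangle$ as a set. Each of the properties \ref{TSP:extends_prec}--\ref{TSP:limit} and the property Club is arithmetical in this array and is verified by straightforward induction along $\xi$. One delicate point is the overspill to a pseudo-ordinal above $\wock$ mentioned after \ref{TSP:computable}; since in $\ATR_0$ we only need the machinery for the given countable ordinal $\xi$, the recursion can be carried out freshly for that $\xi$, avoiding overspill altogether.

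Next I would formalise the game argument from the proof of Theorem~\ref{thm:LSR}. The winning set for Player~I is open — whether a partial play already wins is decidable from finite data together with the computable relations $\preceq_\xi$ and the c.e.\ set $W$ — so the determinacy required is $\Sigma^0_1$-determinacy on $\omega^\omega$, which by Steel's theorem is equivalent to $\ATR_0$ over $\RCA_0$. If Player~II wins, the strategy yields the continuous reduction, arithmetically definable from the strategy. If Player~I wins, the standard proof of open determinacy in $\ATR_0$ produces a winning strategy $S$ via an arithmetical transfinite recursion of rank bounded by an ordinal coded in the model; this $S$ is $\Delta^1_1$ in the parameters, as required.

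The separator $U$ is then defined verbatim from the paper's proof. Claim~\ref{clm:correctness_complexity} computes the complexity of the relation ``$\s$ is strongly $\alpha$-correct for $y$'' by recursion on $\alpha\le\xi$; this is itself an arithmetical transfinite recursion and is licensed in $\ATR_0$. With these relations in hand, $U$ is manifestly $\Sigma^0_{1+\xi}(S)\subseteq \Sigma^0_{1+\xi}(\Delta^1_1)$. The verification that $B_0 \subseteq U$ and $B_1 \cap U = \emptyset$ — which proceeds by using Claim~\ref{claim:main:extending_correct_sequences} to build a play of the game against $S$ from a hypothetical $y\in (U\cap B_1)\cup (B_0\setminus U)$ — is an arithmetical construction along a tree whose well-foundedness follows from $S$ being a winning strategy, and so again goes through in $\ATR_0$.

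The principal obstacle is the careful formalisation of the true stages machinery itself, in particular the limit-level coherence property~\ref{TSP:limit}, which in the companion paper's boldface development is obtained by a diagonal intersection that must here be rephrased as a single arithmetical-transfinite recursion. A secondary but important point is to verify that every appeal to ``$\Delta^1_1$'' in the argument is realised by an arithmetical transfinite recursion on a countable ordinal already present in the model, so that no hidden use of $\Pi^1_1$-comprehension creeps in. Once these are handled, the remainder of the proof is bookkeeping along the arguments already laid out in Section~\ref{sec:Louveau-Saint-Raymond}.
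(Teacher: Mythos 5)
Your proposal is correct and takes essentially the same route as the paper, whose own ``proof'' of \cref{thm:ATR} is just the preceding paragraph's observation that the true stages machinery and the arguments of \cref{sec:Louveau-Saint-Raymond} are effective enough to be carried out in $\ATR_0$. You supply the details the paper leaves implicit (building the machinery by arithmetical transfinite recursion for the fixed $\xi$, invoking open determinacy in $\ATR_0$, and checking that the strategy and the correctness relations are obtained by recursions on ordinals present in the model), all consistent with the paper's intended argument.
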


We remark that in contrast, Louveau's original proof in \cite{Louveau:80:Separation}, which is the proof which appears in standard texts such as \cite{SacksBook} or \cite{Arnie_Miller:book}, appears to require the stronger system $\PiCA_0$.

\bibliography{Louveau_Sep_Bulletin}
\bibliographystyle{alpha}



\end{document}